\newtheorem{lm}{Lemma}
\newtheorem{prop}{Proposition}
\newtheorem{thm}{Theorem}
\newtheorem{cor}{Corollary}
\theoremstyle{definition}
\newtheorem{dfn}{Definition}
\newtheorem{rmk}{Remark}
 \author[Franzina]{Giovanni Franzina}
 \address{Dipartimento di Matematica, Universit\`a degli Stud\^i di Trento, Via Sommarive 14, Povo (TN), Italy}
 \email{franzina@science.unitn.it}
 \author[Lindqvist]{Peter Lindqvist}
 \address{Department of Matematics, Norwegian University of Science and 
Technology, 7491 Trondheim, Norway}
 \email{lqvist@math.ntnu.no}
\title[An eigenvalue problem with variable exponents]{An eigenvalue problem with variable exponents}
\keywords{Non-standard growth conditions, variable exponents, nonlinear eigenvalues}
\subjclass[2010]{35J60,35J92,35P30}
\begin{document}
\maketitle

\begin{abstract}
 A highly nonlinear eigenvalue problem is studied in a Sobolev space with 
variable exponent. The Euler-Lagrange equation for the minimization of a 
Rayleigh quotient of two Luxemburg norms is derived. The asymptotic case 
with a ``variable infinity'' is treated. Local uniqueness is proved for 
the viscosity solutions.
\end{abstract}

\section{Introduction}
An expedient feature of many eigenvalue problems is that the eigenfunctions may be multiplied by constants.
That is the case for our non-linear problem in this note. We will study the eigenvalue problem coming from 
the minimization of the Rayleigh quotient
\begin{equation}\label{eq:luxray}
	\frac{\|\nabla u \|_{p(x),\Omega}}{\| u \|_{p(x),\Omega}}
\end{equation}
among all functions belonging to the Sobolev space $W^{1,p(x)}_{0}(\Omega)$ with variable exponent $p(x)$.
Here $\Omega$ is a bounded domain in $\mathbb{R}^{n} $ and the variable exponent $p(x)$ is a smooth function,
$1<p^{-}\le p(x)\le p^{+}<\infty$. The norm is the so-called {\em Luxemburg norm}.
\vskip.2cm

If $p(x)=p$, a constant in the range $1<p<\infty$, the problem reduces to the minimization of the Rayleigh quotient
\begin{equation}\label{eq:rayp}
	\frac{\displaystyle \int_{\Omega} |\nabla u|^{p}\,dx }{\displaystyle\int_{\Omega}|u|^{p}\,dx}
\end{equation}
among all $u\in W^{1,p}_{0}(\Omega)$, $u\not\equiv0$. Formally, the Euler-Lagrange equation is 
\begin{equation}\label{eq:typicaleig}
	\mathop{\rm div} \big( |\nabla u|^{p-2}\nabla u \big) + \lambda |u|^{p-2}u=0.
\end{equation}
The special case $p=2$ of this much studied problem yields the celebrated Helmholtz equation
\[
	\Delta u + \lambda u = 0.
\]

It is decisive that homogeneity holds: if $u$ is a minimizer, so is $cu$ for any non-zero constant $c$. On the contrary,
the quotient
\begin{equation}\label{eq:raypx}
\frac{\displaystyle \int_{\Omega}|\nabla u|^{p(x)}\,dx}{\displaystyle\int_{\Omega}|u|^{p(x)}\,dx}
\end{equation}
with variable exponent does not possess this expedient property, in general. Therefore its infimum
over all $\varphi\in C^{\infty}_{0}(\Omega)$, $\varphi\not\equiv0$, is often zero and no mimizer appears in the space
$W^{1,p(x)}_{0}(\Omega)$, except the trivial $\varphi\equiv0$, which is forbidden. For an example, we refer to \cite[pp. 444--445]{FZ2}. A way to avoid this collapse is to impose the constraint
\[
	\int_{\Omega}|u|^{p(x)}\,dx= \, \text{constant}. 
\]

Unfortunately, in this setting the minimizers obtained for different normalization constants are difficult to compare
in any reasonable way, except, of course, when $p(x)$ is constant.
For a suitable $p(x)$, it can even happen that any positive $\lambda$ is an eigenvalue for some choice of the
normalizing constant. Thus \eqref{eq:raypx} is not a proper generalization of \eqref{eq:rayp}, which
has a well defined spectrum.

A way to avoid this situation is to use the Rayleigh quotient \eqref{eq:luxray}, where we have used the notation
\begin{equation}\label{eq:luxnorm}
\| f\|_{p(x),\Omega} = \inf \left\{ \gamma>0\,\colon\, \int_{\Omega}\left\vert \frac{f(x)}{\gamma}\right\vert^{p(x)}
					\frac{dx}{p(x)}\le 1	\right\}
\end{equation}
for the Luxemburg norm.
This restores the homogeneity. In the integrand, the use of $p(x)^{-1}\,dx$ (rather than $dx$) has no
bearing, but it simplifies the equations a little.

The existence of a minimizer follows easily by the direct method in the Calculus of Variations, cf.~\cite{G}.
We will derive the Euler-Lagrange equation
\begin{equation}
\label{eq:EL}
\mathop{\rm div} \left( \left\vert \frac{\nabla u}{K}\right\vert^{p(x)-2} \frac{\nabla u}{K}\right)
	+ \frac{K}{k}S \left\vert \frac{u}{k}\right\vert^{p-2}\frac{u}{k}=0,
\end{equation}
where the constants are
\begin{equation}\label{eq:const}
K = \| \nabla u\|_{p(x)}, \quad k = \| u \|_{p(x)}, \quad
	S = \frac{\displaystyle \int_{\Omega}\left\vert \frac{\nabla u}{K}\right\vert^{p(x)}\,dx}{
		\displaystyle \int_{\Omega}\left\vert\frac{u}{k}\right\vert^{p(x)}\,dx}.
\end{equation}
They depend on $u$. Notice that we are free to fix only one of the norms $K$ and $k$. The minimum of the
Rayleigh quotient \eqref{eq:luxray} is $\frac{K}{k}$. Inside the integrals defining the constant $S$,
we now have $dx$ (and not $p(x)^{-1}\,dx$), indeed.
Therefore it is possible that $S\neq1$. For a constant exponent, $S=1 $ and the Euler-Lagrange equation
reduces to \eqref{eq:typicaleig}. Sometimes we write $K_{u},k_{u}, S_{u}$.
\vskip.2cm

We are interested in replacing $p(x)$ by a {\em ``variable infinity''} $\infty(x)$. The passage to infinity is
accomplished so that $p(x)$ is successively replaced by $jp(x)$, $j=1,2,3\ldots$ In order to identify the limit equation,
as $jp(x)\to\infty$, we use the theory of viscosity solutions. In the case of a constant $p(x)$, the limit
equation is
\begin{equation}
\label{eq:lqvinfty}
\max\left\{ \Lambda_{\infty}- \frac{|\nabla u|}{u},\sum_{i,j=1}^{n} \frac{\partial u}{\partial x_{i}}\frac{\partial u}{\partial x_{j}} \frac{\partial^{2} u}{\partial x_{i}\partial x_{j}}  \right\}=0,
\end{equation}
where $u\in W^{1,\infty}_{0}(\Omega)$, $u>0$ and
\begin{equation}
\label{eq:inradius}
\Lambda_{\infty} = \frac{1}{\displaystyle \max_{x\in \Omega} \mathop{\rm dist}(x,\partial\Omega)}.
\end{equation}
This has been treated in~\cite{JLM} (see also~\cite{CD,JL,J}). An interesting interpretation in terms of optimal mass transportation is given
in~\cite{CDJ}. According to a recent manuscript by Hynd, Smart and Yu, there are domains in which
there can exist several linearly independent positive eigenfunctions, see~\cite{HSY}. Thus the eigenvalue
$\Lambda_{\infty}$ is not always simple.

In our case the limit equation reads
\begin{equation}
\label{eq:curiouseig}
\max\left\{ \Lambda_{\infty} - \frac{|\nabla u|}{u}, \:\Delta_{\infty(x)}\! \left(\,\frac{u}{K}\,\right) \right\} = 0,
\end{equation}
where $K=\|\nabla u\|_{\infty,\Omega}$ and
\begin{equation}
\label{eq:inftyx}
\Delta_{\infty(x)} v = \sum_{i,j=1}^{n} \frac{\partial v}{\partial x_{i}}\frac{\partial v}{\partial x_{j}} \frac{\partial^{2} v}{
	\partial x_{i}\partial x_{j}} 
	+ |\nabla v|^{2}\,\ln\big(|\nabla v|\big) \big\langle  \nabla v,\nabla \ln p \big\rangle.
\end{equation}
We are able to establish that if $\Lambda_{\infty}$ is given the value \eqref{eq:inradius}, the same as for a
constant exponent, then the existence of a non-trivial solution is guaranteed. We also prove
a {\em local uniqueness} result: in very small interior subdomains we cannot ``improve'' the solution. The technically
rather demanding proof is based on the modern theory of viscosity solutions, cf.~\cite{CIL,K}, and
we assume that the reader is familiar with this topic.
\vskip.2cm

Needless to say, many open problems remain. To mention one, for a finite variable exponent $p(x)$ we
do not know whether
the first eigenvalue (the minimum of the Rayleigh quotient) is simple. The methods in~\cite{BK,L} do not work well
now. There are also many gaps in the theory available at present: due to the lack of a proper Harnack inequality, we cannot 
assure that the limit of the $jp(x)$-eigenfunctions is strictly positive. A discussion about analogous
difficulties can be found in~\cite{AH}.
In the present work we restrict ourselves to positive eigenfunctions.
We hope to return to this fascinating topic in
the future.

\section{Preliminaries}
We will always assume that $\Omega$ is a bounded
domain in the $n$-dimensional space $\mathbb{R}^{n}$ and that the variable
exponent $p(x)$ is in the range
\begin{equation}
\label{eq:range}
1<p^{-}\le p(x)\le p^{+}<\infty,
\end{equation}
when $x\in\Omega$, and belongs to $C^{1}(\Omega)\cap W^{1,\infty}(\Omega)$. Thus
$\| \nabla p \|_{\infty,\Omega}<\infty$.

Next we define $L^{p(x)}(\Omega)$ and the Sobolev space $W^{1,p(x)}(\Omega)$ with variable exponent $p(x)$.
We say that $u\in W^{1,p(x)}(\Omega)$ if $u$ and its distributional gradient $\nabla u$ are
measurable functions satisfying
\[
	\int_{\Omega}|u|^{p(x)}\,dx<\infty,\quad \int_{\Omega}|\nabla u|^{p(x)}\,dx<\infty.
\]
The norm of the space $L^{p(x)}(\Omega)$ is defined by \eqref{eq:luxnorm}. This is a Banach space. So is
$W^{1,p(x)}(\Omega)$ equipped with the norm
\[
	\|u \|_{p(x),\Omega} + \| \nabla u \|_{p(x),\Omega}.
\]
Smooth functions are dense in this space, and so we can define the space $W^{1,p(x)}_{0}(\Omega)$
as the completion of $C^{\infty}_{0}(\Omega)$ in the above norm. We refer to~\cite{FZ2} and the monograph~\cite{DHHR}
about these spaces.
\vskip.2cm
The following properties are used later.

\begin{lm}[Sobolev]\label{lm:sob}
The inequality
\[
 \| u\|_{p(x),\Omega} \le C \| \nabla u\|_{p(x),\Omega}
\]
holds for all $u\in W^{1,p(x)}_0(\Omega)$; the constant is independent of $u$.
\end{lm}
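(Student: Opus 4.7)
The plan is to follow the classical blueprint for the Poincar\'e inequality, adapted to variable exponents. Since $W^{1,p(x)}_{0}(\Omega)$ is by definition the completion of $C^{\infty}_{0}(\Omega)$ in the Sobolev norm, and both sides of the asserted inequality are continuous in that norm, a density argument reduces the problem to the case $u\in C^{\infty}_{0}(\Omega)$. Extending such a $u$ by zero to all of $\mathbb{R}^{n}$ and averaging the identity $u(x)=-\int_{0}^{\infty}\omega\cdot\nabla u(x+t\omega)\,dt$ over $\omega\in S^{n-1}$ yields the familiar pointwise bound
\[
 |u(x)|\le C_{n}\int_{\mathbb{R}^{n}}\frac{|\nabla u(y)|}{|y-x|^{n-1}}\,dy = C_{n}\,I_{1}(|\nabla u|)(x),
\]
where $I_{1}$ is the Riesz potential of order $1$ and $|\nabla u|$ is understood to vanish outside $\Omega$.

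Passing to Luxemburg norms, the task becomes to bound $\|I_{1}(|\nabla u|)\|_{p(x),\Omega}$ in terms of $\|\nabla u\|_{p(x),\Omega}$. Here the boundedness of $I_{1}$ on $L^{p(x)}(\Omega)$ enters (equivalently, via Hedberg's inequality, of the Hardy--Littlewood maximal operator). Under our hypotheses $1<p^{-}\le p^{+}<\infty$ and $p\in C^{1}(\Omega)\cap W^{1,\infty}(\Omega)$, the latter giving Lipschitz and hence log-H\"older continuity of $p$, and with $\Omega$ bounded, the classical theorem of Diening applies and provides the required bound. Translating through the modular inequality $\int_{\Omega}|\nabla u/\gamma|^{p(x)}\,\frac{dx}{p(x)}\le 1$, valid for every $\gamma>\|\nabla u\|_{p(x),\Omega}$, one then obtains the analogous modular estimate for $u/(C\gamma)$ and concludes $\|u\|_{p(x),\Omega}\le C\|\nabla u\|_{p(x),\Omega}$ with $C$ depending only on $\Omega$, $n$, $p^{-}$, $p^{+}$ and the log-H\"older constant of $p$, but not on $u$.

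The main obstacle, and truly the only non-elementary step, is precisely the boundedness of the maximal operator on variable Lebesgue spaces: without a regularity assumption on $p$ this can fail dramatically, as is well known from counterexamples in the literature. In the smooth regime of the present paper, however, everything fits within the standard machinery, and no new ideas are needed beyond careful bookkeeping of the Luxemburg norm. For the details we refer to the monograph cited in the preliminaries.
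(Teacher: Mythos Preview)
The paper does not actually prove this lemma; it is stated without proof as a known property of variable-exponent Sobolev spaces, with the references~\cite{FZ2} and~\cite{DHHR} given just before for background on $W^{1,p(x)}(\Omega)$. So there is no ``paper's own proof'' to compare against---you have supplied an argument where the authors only cite one.

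Your sketch is correct and follows a standard route: the pointwise Riesz-potential bound $|u|\le C_{n}I_{1}(|\nabla u|)$ for compactly supported smooth $u$, followed by the boundedness of $I_{1}$ (equivalently, via Hedberg, of the maximal operator) on $L^{p(\cdot)}(\Omega)$, which is Diening's theorem under log-H\"older continuity. The paper's standing hypothesis $p\in C^{1}(\Omega)\cap W^{1,\infty}(\Omega)$ more than suffices for this. The final step from the operator bound to the Luxemburg-norm inequality is just monotonicity of the norm. This is essentially the derivation carried out in~\cite{DHHR}, so your reference at the end is apt.

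One small remark: for the Poincar\'e inequality with the \emph{same} exponent on both sides and $\Omega$ bounded, the full maximal-operator machinery is heavier than strictly necessary; more elementary arguments exist in the literature (and in~\cite{DHHR}) that avoid Diening's theorem. But your approach is entirely valid, and since the paper treats the lemma as a black box anyway, there is no discrepancy to flag.
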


In fact, even a stronger inequality is valid.

\begin{lm}[Rellich-Kondrachev]
 \label{lm:RellKondr}
\sloppy Given a sequence $u_j\in  W^{1,p(x)}_0(\Omega) $ such that $\|\nabla u_j \|_{p(x),\Omega}\le M$,
$j=1,2,3,\ldots,$ there exists a $u\in W^{1,p(x)}_0(\Omega)$ such that
$u_{j_\nu} \to u$
strongly in $L^{p(x)}(\Omega)$ and
$\nabla u_{j_\nu}\rightharpoonup \nabla u$ weakly in $L^{p(x)}(\Omega)$ for some subsequence.
\end{lm}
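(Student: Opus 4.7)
The plan is to combine the Sobolev inequality of Lemma 2.1, the reflexivity of $W^{1,p(x)}_0(\Omega)$, and the classical Rellich-Kondrachev theorem, and then to upgrade the resulting compactness from the classical Lebesgue space to the variable-exponent one.

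First, by Lemma 2.1 the hypothesis $\|\nabla u_j\|_{p(x),\Omega}\le M$ gives a uniform bound on the full Sobolev norm, so $\{u_j\}$ is bounded in $W^{1,p(x)}_0(\Omega)$. The range \eqref{eq:range} makes both $L^{p(x)}(\Omega)$ and $W^{1,p(x)}_0(\Omega)$ uniformly convex and hence reflexive, and Eberlein-\v{S}mulian yields a subsequence with $u_{j_\nu}\rightharpoonup u$ in $W^{1,p(x)}_0(\Omega)$. This already produces the weak gradient convergence $\nabla u_{j_\nu}\rightharpoonup \nabla u$ in $L^{p(x)}(\Omega)$ required by the statement.

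To pass to almost-everywhere convergence I reduce to a constant exponent: from the elementary inequality $|v|^{p^-}\le 1+|v|^{p(x)}$ together with the norm-modular relation on bounded sets, $\{u_j\}$ is also bounded in the classical space $W^{1,p^-}_0(\Omega)$. The standard Rellich-Kondrachev theorem then provides a further subsequence converging strongly in $L^{p^-}(\Omega)$, and, after one more extraction, a.e.\ in $\Omega$; uniqueness of weak limits identifies the pointwise limit with $u$.

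The remaining and most delicate step is to upgrade this a.e.\ convergence to strong convergence in $L^{p(x)}(\Omega)$. Since in the range \eqref{eq:range} norm convergence is equivalent to modular convergence, the task reduces to proving
\[
\int_\Omega |u_{j_\nu}-u|^{p(x)}\,dx \longrightarrow 0,
\]
which I would carry out via Vitali's convergence theorem. The main obstacle is equi-integrability of $\{|u_{j_\nu}-u|^{p(x)}\}$; I plan to extract it from a variable-exponent Sobolev-Poincar\'e inequality which, under the $C^{1}\cap W^{1,\infty}$ regularity assumed on $p$, yields a uniform bound on $\|u_j\|_{q(x),\Omega}$ for some $q(x)>p(x)$, and hence controls $\int_E |u_j|^{p(x)}\,dx$ by a positive power of $|E|$. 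Combined with the a.e.\ limit, this closes the argument.
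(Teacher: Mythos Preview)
The paper does not supply a proof of this lemma; it is stated as a background fact, with the reader referred to~\cite{FZ2} and the monograph~\cite{DHHR} for the theory of the spaces $L^{p(x)}$ and $W^{1,p(x)}$. So there is no ``paper's own proof'' to compare against.

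Your sketch is a reasonable outline of the standard argument and is essentially correct. A couple of remarks on the one place you flag as delicate. Reducing to the constant exponent $p^-$ for the a.e.\ step is fine, but note that it is \emph{not} enough for equi-integrability: the classical embedding $W^{1,p^-}_0\hookrightarrow L^{(p^-)^*}$ can fail to dominate $p^+$ (e.g.\ $n$ large, $p^-$ small, $p^+$ close to $(p^-)^*$), so you really do need the variable-exponent Sobolev embedding you propose, not a constant-exponent one. Under the standing hypothesis $p\in C^1(\Omega)\cap W^{1,\infty}(\Omega)$ the log-H\"older condition holds, and one has the continuous embedding $W^{1,p(x)}_0(\Omega)\hookrightarrow L^{q(x)}(\Omega)$ for any $q(\cdot)$ with $q(x)\le p^*(x)-\varepsilon$ (with the usual convention when $p(x)\ge n$); taking, say, $q(x)=p(x)+\varepsilon_0$ for a small constant $\varepsilon_0>0$ then gives the uniform higher integrability you need, and H\"older's inequality in the variable-exponent form yields
\[
\int_E |u_{j_\nu}|^{p(x)}\,dx \;\le\; C\,\big\|\,|u_{j_\nu}|^{p(\cdot)}\,\big\|_{q(\cdot)/p(\cdot)}\,\|1\|_{(q(\cdot)/p(\cdot))',E}
\;\longrightarrow 0 \quad\text{as }|E|\to 0,
\]
uniformly in $\nu$. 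With this in hand, Vitali's theorem plus the a.e.\ convergence give modular convergence, and since $p^+<\infty$ this is equivalent to norm convergence in $L^{p(x)}(\Omega)$. So the route is sound; just be explicit that the higher-integrability step must be done in the variable-exponent setting, and mention how you treat points where $p(x)\ge n$.
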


\vskip.2cm
Eventually, from now on we shall write $\|\cdot\|_{p(x)}$ rather than $\|\cdot\|_{p(x),\Omega}$, provided
this causes no confusion.

\vskip.2cm
We need to identify the space
\(
	\bigcap_{j=1}^{\infty} W^{1,jp(x)}(\Omega).
\)
This limit space is nothing else than the familiar $W^{1,\infty}(\Omega)$. According to the next lemma,
it is independent of $p(x)$.

\begin{lm}
\label{lm:elem}
If $u$ is a measurable function in $\Omega$, then
\[
	\lim_{j\to\infty} \|u\|_{jp(x)} = \| u \|_{\infty}.
\]
\end{lm}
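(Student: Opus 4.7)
The plan is to establish matching upper and lower bounds for $\liminf_{j}\|u\|_{jp(x)}$ and $\limsup_{j}\|u\|_{jp(x)}$, working directly from the Luxemburg definition \eqref{eq:luxnorm} applied with exponent $jp(x)$. I would split into the cases $M:=\|u\|_\infty<\infty$ and $\|u\|_\infty=\infty$, noting that the desired equality is trivial if $u=0$ a.e.

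\emph{Upper bound when $M<\infty$.} Fix any $\gamma>M$. Then $|u/\gamma|\le M/\gamma<1$ almost everywhere, so $|u/\gamma|^{jp(x)}$ is pointwise bounded by $1$ and tends to $0$ a.e.\ as $j\to\infty$. Since $|\Omega|<\infty$, the dominated convergence theorem (combined with the extra factor $1/(jp(x))\le 1/(jp^{-})$ in front) forces
\[
\int_\Omega \left\vert\frac{u(x)}{\gamma}\right\vert^{jp(x)}\frac{dx}{jp(x)} \longrightarrow 0.
\]
Thus the test integral is $\le 1$ for $j$ large, giving $\|u\|_{jp(x)}\le \gamma$, and $\limsup_j \|u\|_{jp(x)}\le M$ follows by letting $\gamma\downarrow M$.

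\emph{Lower bound.} Given $\gamma$ with $0<\gamma<M$ (or any $\gamma>0$ when $M=\infty$), choose $\gamma'$ with $\gamma<\gamma'<M$. By definition of the essential supremum, the set $E=\{x\in\Omega:|u(x)|>\gamma'\}$ has positive measure, and on $E$ one has $|u/\gamma|^{jp(x)}\ge (\gamma'/\gamma)^{jp^{-}}$, whence
\[
\int_\Omega \left\vert\frac{u}{\gamma}\right\vert^{jp(x)}\frac{dx}{jp(x)} \ge \frac{|E|}{jp^{+}}\left(\frac{\gamma'}{\gamma}\right)^{jp^{-}} \longrightarrow \infty.
\]
In particular the test integral exceeds $1$ for $j$ large, so $\|u\|_{jp(x)}\ge \gamma$. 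When $M<\infty$ this gives $\liminf_j \|u\|_{jp(x)}\ge M$ by letting $\gamma\uparrow M$; when $M=\infty$ it shows directly that $\|u\|_{jp(x)}\to\infty=\|u\|_\infty$.

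I do not expect any serious obstacle. The only delicate point is the normalizing weight $1/p(x)$ inside the Luxemburg integrand, which becomes $1/(jp(x))$ for exponent $jp(x)$: in the upper bound this extra $1/j$ merely accelerates convergence to zero, and in the lower bound it contributes only a polynomial factor $1/(jp^{+})$ that cannot defeat the exponential growth $(\gamma'/\gamma)^{jp^{-}}$. Hence the presence of this weight has no effect on the limit.
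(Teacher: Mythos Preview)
Your argument is correct and follows essentially the same route as the paper's proof: both establish $\limsup_j\|u\|_{jp(x)}\le M$ and $\liminf_j\|u\|_{jp(x)}\ge M$ by testing the Luxemburg condition, with the paper taking $j$-th roots of the defining integral at $\gamma=M_j$ while you instead fix $\gamma$ and let the test integral drift to $0$ or $\infty$ via dominated convergence and exponential growth. The only cosmetic difference is your use of DCT in the upper bound where the paper uses a direct pointwise estimate; both are equally elementary.
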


\begin{proof}
The proof is elementary. We use the notation
\begin{align*}
	M &  = \| u\|_{\infty} = \mathop{\rm ess\,sup}_{x\in\Omega} |u(x)|,  & \\
	 M_{j} & = \| u \|_{jp(x)},
\end{align*}
and we claim that
\[
	\lim_{j\to\infty} M_{j} = M.
\]

To show that $\limsup_{j\to\infty} M_{j}\le M$, we only have to consider those indices $j$ for which $M_{j}>M$.
Then, since $p(x)>1$,
\[
1 = \left(\int_{\Omega}\left\vert \frac{u(x)}{M_{j}}\right\vert^{jp(x)}\,\frac{dx}{jp(x)}\right)^\frac{1}{j}
\le \frac{M}{M_{j}} \left(\int_{\Omega} \frac{dx}{jp(x)}\right)^\frac{1}{j},
\]
and the inequality follows.

To show that $\liminf_{j\to\infty} M_{j}\ge M$, we may assume that $0<M<\infty$.
Given $\varepsilon>0$, there is a set
$A_{\varepsilon}\subset\Omega$ such that $\text{\rm meas} (A_{\varepsilon})>0$ and $|u(x)|>M-\varepsilon$
in $A_{\varepsilon}$. We claim that $\liminf_{j\to\infty}M_{j}\ge M-\varepsilon$. Ignoring those indices for which
$M_{j}\ge M-\varepsilon$, we have
\[
1=\left(\int_{\Omega} \left\vert \frac{u(x)}{M_{j}}\right\vert^{jp(x)}  \frac{dx}{jp(x)}\right)^\frac{1}{j}
\ge \left(\int_{A_{\varepsilon}} \left\vert \frac{u(x)}{M_{j}}\right\vert^{jp(x)} \, \frac{dx}{jp(x)}\right)^\frac{1}{j}
\ge \frac{M-\varepsilon}{M_{j}} \left(\int_{A_{\varepsilon}} \frac{dx}{jp^{+}}\right)^\frac{1}{j},
\]
and the claim follows. Since $\varepsilon$ was arbitrary the Lemma follows. The case $M=\infty$
requires a minor modification in the proof.
\end{proof}

\section{The Euler Lagrange equation}
We define
\begin{equation}
\label{eq:princfreq}
\Lambda_{1} = \inf_{v} \frac{\|\nabla v\|_{p(x)}}{\|v\|_{p(x)}},
\end{equation}
where the infimum is taken over all $v\in W^{1,p(x)}_{0}(\Omega)$, $v\not\equiv0$. One gets the same
infimum by requiring that $v\in C^{\infty}_{0}(\Omega)$.
The Sobolev inequality (Lemma~\ref{lm:sob})
\[
	\| v\|_{p(x)}\le C \|\nabla v\|_{p(x)},
\]
where $C$ is independent of $v$, shows that $\Lambda_{1}>0$. 

To establish the
existence of a non-trivial minimizer, we select a minimizing sequence of admissible functions $v_{j}$,
normalized so that $\|v_{j}\|_{p(x)}=1$. Then $$\Lambda_{1}=\lim_{j\to\infty}\|\nabla v_{j}\|_{p(x)}.$$
Recall the Rellich-Kondrachev Theorem for Sobolev spaces with variable exponents (Lemma~\ref{lm:RellKondr}).
Hence, we can extract a subsequence
$v_{j_{\nu}}$ and find a function $u\in W^{1,p(x)}_{0}(\Omega)$ such that $v_{j_{\nu}}\to u$ strongly in
$L^{p(x)}(\Omega)$ and $\nabla v_{j_{\nu}}\rightharpoonup \nabla u$ weakly in $L^{p(x)}(\Omega)$.
The norm is weakly sequentially lower semicontinuous. Thus,
\[
	\frac{\|\nabla u\|_{p(x)}}{\| u\|_{p(x)}}
	\le \lim_{\nu\to\infty}
	\frac{\|\nabla v_{j_{\nu}}\|_{p(x)} }{\|v_{j_{\nu}}\|_{p(x)}} = \Lambda_{1}.
\]
This shows that $u$ is a minimizer. Notice that if $u$ is a minimizer, so is $|u|$. We have proved
the following proposition.

\begin{prop}
There exists a non-negative minimizer $u\in W^{1,p(x)}_{0}(\Omega)$, $u\not\equiv0$, of the Rayleigh
quotient \eqref{eq:luxray}.
\end{prop}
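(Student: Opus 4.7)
The plan is to apply the direct method of the Calculus of Variations to the Rayleigh quotient \eqref{eq:luxray}. Exploiting its scale invariance, I would restrict to admissible functions normalized by $\|v\|_{p(x)}=1$ and pick a minimizing sequence $\{v_j\}\subset W^{1,p(x)}_0(\Omega)$ with $\|v_j\|_{p(x)}=1$ and $\|\nabla v_j\|_{p(x)}\to \Lambda_{1}$. Since this sequence is bounded in $W^{1,p(x)}_0(\Omega)$, Lemma~\ref{lm:RellKondr} supplies a subsequence $\{v_{j_\nu}\}$ and a limit $u\in W^{1,p(x)}_0(\Omega)$ with $v_{j_\nu}\to u$ strongly in $L^{p(x)}(\Omega)$ and $\nabla v_{j_\nu}\rightharpoonup \nabla u$ weakly in $L^{p(x)}(\Omega)$.

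From the strong convergence in $L^{p(x)}(\Omega)$ I would deduce $\|u\|_{p(x)}=1$, which guarantees $u\not\equiv 0$ and prevents collapse to the trivial function. The weak sequential lower semicontinuity of the Luxemburg norm then yields $\|\nabla u\|_{p(x)}\le \liminf_{\nu\to\infty}\|\nabla v_{j_\nu}\|_{p(x)}=\Lambda_{1}$, so that
\[
	\Lambda_{1}\le\frac{\|\nabla u\|_{p(x)}}{\|u\|_{p(x)}}\le\Lambda_{1}
\]
by the very definition of $\Lambda_{1}$, proving that $u$ is a minimizer. Non-negativity is then obtained for free by replacing $u$ with $|u|$: the latter still lies in $W^{1,p(x)}_0(\Omega)$, satisfies $|\nabla |u||=|\nabla u|$ almost everywhere, and obeys $\||u|\|_{p(x)}=\|u\|_{p(x)}$, so it realizes the same value of the quotient.

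The two non-routine ingredients are the weak sequential lower semicontinuity of the Luxemburg norm and the variable exponent Rellich-Kondrachev embedding; the latter is already granted by Lemma~\ref{lm:RellKondr}, and the former is standard in $L^{p(x)}(\Omega)$. The genuinely delicate point is the non-triviality of the limit: if the embedding $W^{1,p(x)}_0(\Omega)\hookrightarrow L^{p(x)}(\Omega)$ were merely continuous rather than compact, the normalization $\|v_{j_\nu}\|_{p(x)}=1$ could fail to transfer to $u$ and the argument would degenerate to $u\equiv 0$. This is precisely where the compactness afforded by Lemma~\ref{lm:RellKondr} is indispensable, and where the variable exponent setting—as opposed to the constant one—demands the slightly stronger machinery developed in the references cited in Section~2.
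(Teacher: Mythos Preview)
Your argument is correct and follows essentially the same approach as the paper: normalize a minimizing sequence, apply the Rellich--Kondrachev compactness of Lemma~\ref{lm:RellKondr} to extract a convergent subsequence, use weak lower semicontinuity of the Luxemburg norm on the gradients, and finally pass to $|u|$ for non-negativity. Your write-up is in fact slightly more explicit than the paper's (you spell out that strong $L^{p(x)}$ convergence preserves the normalization $\|u\|_{p(x)}=1$, which is what guarantees $u\not\equiv 0$), but the structure is identical.
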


In order to derive the Euler-Lagrange equation for the minimizer(s), we fix an arbitrary test function
$\eta\in C^{\infty}_{0}(\Omega)$ and consider the competing function
\[
	v(x)=u(x)+\varepsilon \eta(x),
\]
and write
\[
	k=k({\varepsilon})=\|v\|_{p(x)}, \quad K = K(\varepsilon) = \|\nabla v\|_{p(x)}.
\]
A necessary condition for the inequality
\[
	\Lambda_{1} = \frac{K(0)}{k(0)} \le \frac{K(\varepsilon)}{k(\varepsilon)}
\]
is that
\[
\frac{d}{d\varepsilon} \left( \frac{K(\varepsilon)}{k(\varepsilon)}\right) 
	= \frac{K'(\varepsilon)k(\varepsilon)-K(\varepsilon)k'(\varepsilon)}{k(\varepsilon)^{2}} = 0,
		\quad \text{for   } \varepsilon=0.
\]
Thus the necessary condition of minimality reads
\begin{equation}\label{eq:ELkK}
	\frac{K'(0)}{K(0)} = \frac{k'(0)}{k(0)}.
\end{equation}
To find $K'(0)$, differentiate
\[
\int_{\Omega} \left\vert \frac{ \nabla u(x) + \varepsilon \nabla \eta (x) }{K(\varepsilon)}\right\vert^{p(x)} \,
	\frac{dx}{p(x)} = 1,
\]
with respect to $\varepsilon$.
Differentiation under the integral sign is justifiable.  We obtain
\[
	\int_{\Omega} \frac{|\nabla u+\varepsilon \nabla \eta |^{p(x)-2}
	\big\langle \nabla u + \varepsilon\nabla \eta,\nabla\eta\big\rangle}{
		K(\varepsilon)^{p(x)}}\,dx=\int_{\Omega} \frac{|\nabla u+\varepsilon\nabla\eta|^{p(x)}}{
		K(\varepsilon)^{p(x)+1}}K'(\varepsilon)\,dx.
\]
For $\varepsilon=0$, we conclude
\[
\frac{K'(0)}{K(0)} = \frac{\displaystyle\int_{\Omega} K^{-p(x)}
	|\nabla u|^{p(x)-2}\big\langle \nabla u,\nabla\eta\big\rangle\,dx}{
	\int_{\Omega}\left\vert \frac{\nabla u}{K} \right\vert^{p(x)}\,dx}.
\]
A similar calculation yields
\[
\frac{k'(0)}{k(0)} = \frac{\displaystyle \int_{\Omega} k^{-p(x)} |u|^{p(x)-2}u\,\eta\,dx}{
	\int_{\Omega} \left\vert \frac{u}{k}\right\vert^{p(x)}\,dx}\,.
\]
Inserting the results into \eqref{eq:ELkK}, we arrive at equation \eqref{eq:EL} in weak form: for all
test functions $\eta \in C^{\infty}_{0}(\Omega)$, we have
\[
\int_{\Omega}\left\vert\frac{\nabla u}{K}\right\vert^{p(x)-2}\left\langle\frac{\nabla u}{K},\nabla\eta\right\rangle\,dx
=\Lambda_{1}S\int_{\Omega}\left\vert\frac{u}{k}\right\vert^{p(x)-2}\frac{u}{k}\,\eta\,dx,
\]
where $K=\|\nabla u\|_{p(x)}$, $k=\|u\|_{p(x)}$ and $S$ is as in \eqref{eq:const}. Here $\Lambda_{1}=K/k$.
\vskip.2cm

The weak solutions with zero boundary values are called {\em eigenfunctions}, except $u\equiv0$.
We refer to~\cite{AM,AM2,F,FZ,HHLT} for regularity theory.

\begin{dfn}
A function $u\in W^{1,p(x)}_{0}(\Omega)$, $u\not\equiv0$, is an {\em eigenfunction} if the equation
\begin{equation}
	\label{eq:eigeq}
	\int_{\Omega}\left\vert\frac{\nabla u}{K}\right\vert^{p(x)-2}\left\langle\frac{\nabla u}{K},\nabla\eta\right\rangle\,dx
=\Lambda S\int_{\Omega}\left\vert\frac{u}{k}\right\vert^{p(x)-2}\frac{u}{k}\,\eta\,dx
\end{equation}
holds whenever $\eta\in C^{\infty}_{0}(\Omega)$. Here $K=K_{u}$, $k=k_{u}$ and $S=S_{u}$. The corresponding
$\Lambda$ is the {\em eigenvalue}.
\end{dfn}

\begin{rmk}
 \label{rmk:continuity}
According to \cite{AM2,FZ,F}, the weak solutions of equations like \eqref{eq:eigeq} are continuous if the variable
exponent $p(x)$ is H\"older continuous. Thus the eigenfunctions are continuous.
\end{rmk}

If $\Lambda_{1}$ is the minimum of the Rayleigh quotient in \eqref{eq:princfreq}, we must have
\[
	\Lambda\ge \Lambda_{1},
\]
in \eqref{eq:eigeq}, thus $\Lambda_{1}$ is called the {\em first eigenvalue} and the corresponding eigenfunctions
are said to be {\em first eigenfunctions}. To see this, take $\eta=u$ in the equation, which is possible by approximation. Then we obtain, upon cancellations, that
\[
\Lambda = \frac{K}{k} = \frac{\|\nabla u\|_{p(x)}}{\|u\|_{p(x)}} \ge \Lambda_{1}.
\]
We shall restrict ourselves to positive eigenfunctions.

\begin{thm}
\label{thm:exfirsteig}
There exists a continuous strictly positive first eigenfunction. Moreover, any non-negative eigenfunction is strictly positive.
\end{thm}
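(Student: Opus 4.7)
The plan is to decompose the statement into three pieces: existence of a non-negative first eigenfunction, its continuity, and the stronger assertion that every non-negative eigenfunction is strictly positive (which in particular upgrades the non-negative minimizer to a strictly positive one). Proposition~3.1 already yields a non-negative minimizer $u\in W^{1,p(x)}_0(\Omega)$, and such a $u$ satisfies the weak Euler--Lagrange equation \eqref{eq:eigeq} with $\Lambda=\Lambda_1$. Continuity of $u$ in $\Omega$ follows immediately from Remark~3.1, since $p\in C^1(\Omega)$ is in particular H\"older continuous and the regularity results cited there apply to weak solutions of equations of the form \eqref{eq:eigeq}.

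The heart of the proof is the strict positivity. My plan is to view $u$ as a non-negative weak super-solution of a non-degenerate $p(x)$-Laplace equation and invoke the associated Harnack inequality (equivalently, the strong minimum principle). Expanding \eqref{eq:eigeq} gives, for every $\eta\in C^\infty_0(\Omega)$,
\[
\int_\Omega K^{1-p(x)}|\nabla u|^{p(x)-2}\langle \nabla u,\nabla\eta\rangle\,dx
= \Lambda S \int_\Omega k^{1-p(x)}\,|u|^{p(x)-2} u\,\eta\,dx.
\]
Because $K$ and $k$ are \emph{fixed} positive constants and $p^-\le p(x)\le p^+$, the coefficient $a(x):=K^{1-p(x)}$ is bounded between two positive constants. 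Hence this is the weak formulation of
\[
-\mathop{\rm div}\big(a(x)|\nabla u|^{p(x)-2}\nabla u\big) = g(x)\ge 0 \quad \text{in } \Omega,
\]
a $p(x)$-Laplace equation with smooth, non-degenerate coefficient. Testing against non-negative $\eta$, the continuous function $u$ is a non-negative weak super-solution.

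The conclusion follows from the Harnack inequality for such equations with smooth exponents, available in the regularity theory already invoked in Remark~3.1 (\cite{AM,AM2,F,FZ,HHLT}). Suppose $u(x_0)=0$ at some interior point $x_0\in\Omega$. Harnack applied on a ball $B_{2r}(x_0)\Subset\Omega$ yields $\sup_{B_r(x_0)} u \le C\inf_{B_r(x_0)} u = 0$, so $u\equiv 0$ on $B_r(x_0)$. A standard chain-of-balls argument, exploiting the connectedness of $\Omega$, then propagates this to $u\equiv 0$ throughout $\Omega$, contradicting $u\not\equiv 0$. Hence $u>0$ in $\Omega$, as required. The same argument applies verbatim to any non-negative eigenfunction, establishing the second assertion.

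The main obstacle is the invocation of the Harnack inequality for the variable-exponent setting. For $p\in C^1(\Omega)\cap W^{1,\infty}(\Omega)$ this is standard in the finite-exponent regime considered here; it is precisely its absence in the asymptotic passage $jp(x)\to\infty$ that the authors lament in the introduction, so the same route will not be available later for the ``variable infinity'' eigenfunctions.
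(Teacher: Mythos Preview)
Your proposal is correct and follows essentially the same route as the paper: existence of a non-negative minimizer via Proposition~3.1, continuity via Remark~\ref{rmk:continuity}, and strict positivity via the strong minimum principle from \cite{HHLT}. The paper simply invokes \cite{HHLT} in one line, whereas you spell out the supersolution structure (the right-hand side $\Lambda S\,k^{1-p(x)}u^{p(x)-1}\ge 0$ and the bounded coefficient $K^{1-p(x)}$) and the Harnack/chain-of-balls mechanism behind it; this added detail is welcome but not a different argument.
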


\begin{proof}
The existence of a first eigenfunction was clear, since minimizers of \eqref{eq:princfreq} are solutions
of \eqref{eq:eigeq}. But if $u$ is a minimizer, so is $|u|$, and $|u|\ge0$. Thus we have a non-negative one.
By Remark~\ref{rmk:continuity} the eigenfunctions are continuous. The strict positivity then follows by
the strong minimum principle for weak supersolutions in~\cite{HHLT}.
\end{proof}

\vskip.2cm

We are interested in the asymptotic case when the variable exponent approaches $\infty$ via the sequence
$p(x)$, $2p(x)$, $3p(x)\ldots$ The procedure requires viscosity solutions. Thus we first verify that the weak solutions
of the equation \eqref{eq:eigeq}, formally written as
\begin{equation*}
\mathop{\rm div} \left( \left\vert \frac{\nabla u}{K}\right\vert^{p(x)-2} \frac{\nabla u}{K}\right)
	+\Lambda \,S \left\vert \frac{u}{k}\right\vert^{p-2}\frac{u}{k}=0,
\end{equation*}
are viscosity solutions. Given $u\in C(\Omega)\cap W^{1,p(x)}_{0}(\Omega)$, {\em we fix the parameters}
$k=\|u\|_{p(x)}$, $K=\|\nabla u\|_{p(x)}$
{\em and} $S$. Replacing $u$ by a function $\phi\in C^{2}(\Omega)$, but keeping $k,K,S$ unchanged,
we formally get
\[
	\Delta_{p(x)}\phi - |\nabla\phi|^{2}\log(K)\big\langle \nabla \phi,\nabla p(x)\big\rangle
		+\Lambda^{p(x)}\,S|\phi|^{p(x)-2}\phi=0,
\]
where
\begin{align*}
	\Delta_{p(x)}\phi  = \mathop{\rm div} \big( |\nabla \phi|^{p(x)-2}\nabla\phi\big)
	 = |\nabla\phi|^{p(x)-4}\Big\{ |\nabla\phi|^{2}  \Delta & \phi + \big(p(x)-2\big) \Delta_{\infty}\phi
	& \\ & +|\nabla\phi|^{2}\ln\big(|\nabla\phi|\big)\big\langle\nabla\phi,\nabla p(x)\big\rangle \Big\}, &
\end{align*}
and
\[
	\Delta_{\infty}\phi = \sum_{i,j=1}^{n}\frac{\partial \phi}{\partial x_{i}} \frac{\partial \phi}{\partial x_{j}}
							\frac{\partial^{2}\phi}{\partial x_{i}\partial x_{j}}
\]
is the $\infty$-Laplacian. The relation $\Lambda = K/k$ was used in the simplifications.

Let us abbreviate the expression as
\begin{align}
F(x,\phi,\nabla\phi, & D^{2}\phi) = & \nonumber \\
 & |\nabla\phi|^{p(x)-4} \Big\{ |\nabla\phi|^{2}  \Delta \phi + \big(p(x)-2\big) \Delta_{\infty}\phi
 +|\nabla\phi|^{2}\ln\big(|\nabla\phi|\big)\big\langle\nabla\phi,\nabla p(x)\big\rangle & \nonumber \\
 & \qquad \qquad  \qquad  - |\nabla\phi|^{2}\log(K)\big\langle \nabla \phi,\nabla p(x)\big\rangle \Big\}
  + \Lambda^{p(x)} \, S |\phi|^{p(x)-2}\phi =0.
\end{align}
where we deliberately take $p(x)\ge2$. Notice that
\[
	F(x,\phi,\nabla\phi,D^{2}\phi) <0
\]
exactly when
\[
	\Delta_{p(x)}\! \left( \frac{\phi}{K}\right) + \Lambda\, S\left|\frac{\phi}{k}\right|^{p(x)-2}
  \frac{\phi}{k}<0.
\]
Recall that $k,K,S$ where dictated by $u$.

Let $\phi\in C^{2}(\Omega)$ and $x_{0}\in \Omega$. We say that $\phi\in C^{2}(\Omega)$ touches $u$ from below at the 
point $x_{0}$, if $\phi(x_{0})=u(x_{0})$ and $\phi(x)<u(x)$ when $x\neq x_{0}$.

\begin{dfn}
Suppose that $u\in C(\Omega)$. We say that $u$ is a {\em viscosity supersolution} of the equation
\[
	F(x,u,\nabla u,D^{2}u)=0
\]
if, whenever $\phi$ touches $u$ from below at a point $x_{0}\in \Omega$, we have
\[
F(x_{0},\phi(x_{0}),\nabla\phi(x_{0}),D^{2}\phi(x_{0}))\le 0.
\]
We say that $u$ is a {\em viscosity subsolution} if, whenever $\psi\in C^{2}(\Omega)$ touches $u$ from above
at a point $x_{0}\in\Omega$, we have
\[
	F(x_{0},\psi(x_{0}),\nabla\psi(x_{0}),D^{2}\psi(x_{0}))\ge0.
\] 
Finally, we say that $u$ is a {\em viscosity solution} if it is both a viscosity super- and subsolution.
\end{dfn}

Several remarks are appropriate. Notice that the operator $F$ is evaluated for the test function and only at the 
touching point. If the family of test functions is empty at some point, then there is no requirement on $F$ at that point.
The definition makes sense for a merely continuous function $u$, provided that the parameters $k,K,S,\Lambda$
have been assigned values. We always have $\nabla u$ available for this in our problem.

\begin{thm}
\label{thm:WareV}
The eigenfunctions $u$ are viscosity solutions of the equation
\[
	F(x,u,\nabla u,D^{2}u)=0.
\]
\end{thm}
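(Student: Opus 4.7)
Given the symmetry between the super- and subsolution conditions, my plan is to verify the viscosity supersolution property for a weak eigenfunction $u$, the subsolution half being entirely parallel. So I fix $\phi\in C^{2}(\Omega)$ touching $u$ from below at $x_{0}\in\Omega$ and argue by contradiction: assume $F(x_{0},\phi,\nabla\phi,D^{2}\phi)>0$ with the parameters $K=K_{u}$, $k=k_{u}$, $S=S_{u}$ frozen as in the statement preceding the theorem. The equivalence recorded in the paper converts this into
\[
\operatorname{div}\!\left(\left|\frac{\nabla\phi}{K}\right|^{p(x)-2}\!\frac{\nabla\phi}{K}\right)+\Lambda S\left|\frac{\phi}{k}\right|^{p(x)-2}\!\frac{\phi}{k}>0
\]
at $x_{0}$, and the continuity of every ingredient (including $\phi$, $\nabla\phi$, $D^{2}\phi$, $p(x)$, $\nabla p(x)$) lets me propagate it to a small ball $B_{r}(x_{0})\Subset\Omega$; that is, $\phi$ becomes a \emph{classical strict} subsolution of the divergence form of the eigenvalue equation on $B_{r}$.

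The core of the argument is a ``forbidden comparison''. Strict touching yields $m:=\min_{\partial B_{r}}(u-\phi)>0$; set $\tilde\phi=\phi+m/2$, so that $\tilde\phi\le u$ on $\partial B_{r}$ while $\tilde\phi(x_{0})>u(x_{0})$. The function $\eta=(\tilde\phi-u)_{+}$ is non-negative, lies in $W^{1,p(x)}_{0}(B_{r})$, extends by $0$ to an admissible test function on $\Omega$, and is not identically zero since $\eta(x_{0})=m/2$. Plugging $\eta$ into the weak formulation \eqref{eq:eigeq} for $u$ and, on the other hand, multiplying the strict divergence inequality for $\phi$ by $\eta$ and integrating by parts (no boundary term, as $\eta=0$ on $\partial B_{r}$), I subtract the two relations and use $\nabla\eta=(\nabla\phi-\nabla u)\mathbf{1}_{\{\tilde\phi>u\}}$ to obtain
\[
\int_{\{\tilde\phi>u\}}\!\!\left(\left|\frac{\nabla u}{K}\right|^{p(x)-2}\!\frac{\nabla u}{K}-\left|\frac{\nabla\phi}{K}\right|^{p(x)-2}\!\frac{\nabla\phi}{K}\right)\!\cdot(\nabla\phi-\nabla u)\,dx>\Lambda S\!\int_{\{\tilde\phi>u\}}\!\!\left(\left|\frac{u}{k}\right|^{p(x)-2}\!\frac{u}{k}-\left|\frac{\phi}{k}\right|^{p(x)-2}\!\frac{\phi}{k}\right)\!\eta\,dx.
\]

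The contradiction then follows from two pointwise monotonicities. The vector inequality $(|a|^{q-2}a-|b|^{q-2}b)\cdot(a-b)\ge 0$, valid for every $q>1$, forces the left-hand side to be non-positive; meanwhile, on the set $\{\tilde\phi>u\}$ one has $u-m/2<\phi\le u$, and since $t\mapsto|t|^{p(x)-2}t$ is non-decreasing, the integrand on the right is non-negative, so the right-hand side is $\ge 0$. These two facts are incompatible with the strict inequality, which forces $F(x_{0},\phi,\nabla\phi,D^{2}\phi)\le 0$. The subsolution half is mirrored: $\psi$ touches $u$ from above, one sets $\tilde\psi=\psi-m/2$ and $\eta=(u-\tilde\psi)_{+}$, and the same monotonicity chain produces the reverse contradiction.

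The technical point I expect to be most delicate is the degenerate case $\nabla\phi(x_{0})=0$, where the prefactor $|\nabla\phi|^{p(x)-4}$ in $F$ is formally singular. Because $p(x)\ge 2$, the whole bracketed quasilinear part of $F$ extends continuously to $0$ at such points, leaving only the lower-order term $\Lambda^{p(x_{0})}S\,|\phi(x_{0})|^{p(x_{0})-2}\phi(x_{0})$; this case has to be treated either by restricting to test functions with non-vanishing gradient at $x_{0}$ (the customary convention in the viscosity theory of the $p$-Laplacian) or by handling $F$ through its appropriate semicontinuous envelope, where the lower-order obstruction is absorbed. A secondary and routine point is the rigorous integration by parts against the Sobolev cutoff $\eta$, which follows by smooth approximation in $W^{1,p(x)}_{0}(B_{r})$ using $\phi\in C^{2}$.
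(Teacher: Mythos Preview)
Your argument is correct and follows the paper's proof essentially step by step: the same antithesis, the same lifted test function $\tilde\phi=\phi+m/2$, the same cutoff $\eta=(\tilde\phi-u)_{+}$, the same subtraction of the weak equation from the integrated strict inequality, and the same appeal to the monotonicity of $\xi\mapsto|\xi|^{q-2}\xi$ on both sides. Your closing remarks on the degenerate case $\nabla\phi(x_{0})=0$ are a slight overcomplication: under the paper's standing assumption $p(x)\ge 2$, the operator $F$ is genuinely continuous (each singular-looking factor $|\nabla\phi|^{p(x)-4}$ is multiplied by terms of order $|\nabla\phi|^{2}$), and in any case the contradiction is obtained from the divergence-form inequality and the integration by parts, neither of which is affected by the vanishing of $\nabla\phi$ at isolated points.
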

\begin{proof}
This is a standard proof. The equation
\begin{equation}\label{eq:weak}
\int_{\Omega}\left\vert\frac{\nabla u}{K}\right\vert^{p(x)-2}\left\langle\frac{\nabla u}{K},\nabla\eta\right\rangle\,dx
=\Lambda S\int_{\Omega}\left\vert\frac{u}{k}\right\vert^{p(x)-2}\frac{u}{k}\,\eta\,dx
\end{equation}
holds for all $\eta\in W^{1,p(x)}_{0}(\Omega)$. We first claim that $u$ is a viscosity supersolution. Our proof is
indirect. The {\em antithesis} is that there exist a point $x_{0}\in\Omega$ and a test function 
$\phi\in C^{2}(\Omega)$, touching  $u$ from below at $x_{0}$, such that $F(x_{0},\phi(x_{0}),\nabla\phi(x_{0}),
D^{2}\phi(x_{0}))>0$. By continuity, 
\[
	F(x,\phi(x),\nabla\phi(x),D^{2}\phi(x))>0
\]
holds when $x\in B(x_{0},r)$ for some radius $r$ small enough. Then also
\begin{equation}\label{eq:testeq}
\Delta_{p(x)} \left( \frac{ \phi(x)}{K}\right)
	+\Lambda \,S \left\vert \frac{\phi(x)}{k}\right\vert^{p-2}\frac{\phi(x)}{k}>0,
\end{equation}
in $B(x_{0},r)$. Denote
\[
	\varphi = \phi + \frac{m}{2},\qquad m = \min_{\partial B(x_{0},r)}(u-\phi).
\]
Then $\varphi<u$ on $\partial B(x_{0},r)$ but $\varphi(x_{0}) > u(x_{0})$, since $m>0$. Define
\[
	\eta = \big[ \varphi-u\big]_{+} \chi_{B(x_{0},r)}.
\]
Now $\eta\ge0$. If $\eta\not\equiv 0$, we multiply \eqref{eq:testeq} by $\eta$ and we integrate by parts to obtain
the inequality
\[
\int_{\Omega}\left\vert\frac{\nabla \phi}{K}\right\vert^{p(x)-2}\left\langle\frac{\nabla \phi}{K},\nabla\eta\right\rangle\,dx
<\Lambda S\int_{\Omega}\left\vert\frac{\phi}{k}\right\vert^{p(x)-2}\frac{\phi}{k}\,\eta\,dx
\]
We have $\nabla \eta = \nabla\phi-\nabla u$ in the subset where $\varphi\ge u$. Subtracting equation \eqref{eq:weak} by the 
above inequality, we arrive at
\begin{align*}
\int_{\{\varphi>u\}}& \left\langle  \left\vert\frac{\nabla \phi}{K}\right\vert^{p(x)-2}\frac{\nabla \phi}{K}�
-\left\vert\frac{\nabla u}{K}\right\vert^{p(x)-2}\frac{\nabla u}{K} ,
\frac{\nabla \phi}{K} - \frac{\nabla u}{K}
\right\rangle\! dx & \\
& < S \int_{\{\varphi>u\}} \left( \left\vert \frac{\phi}{k} \right\vert^{p(x)-2}\frac{\phi}{k} - \left\vert \frac{u}{k} \right\vert^{p(x)-2}
	\frac{u}{k}\right)\!\! \left( \frac{\varphi-u}{k}\right)\!dx, &
\end{align*}
where the domain of integration is comprised in $B(x_{0},r)$. The last integral is negative since $\phi<u$. The first one is
non-negative due to the elementary inequality
\[
	\big\langle |b|^{p-2}b-|a|^{p-2}a, b-a \big\rangle \ge 0,
\]
which holds for all $p>1$ because of the convexity of the $p$-th power. We can take $p=p(x)$.
It follows that $\varphi\le u$ in $B(x_{0},r)$. This contradicts $\varphi(x_{0})> u(x_{0})$. Thus the antithesis
was false and $u$ is a viscosity supersolution.

In a similar way we can prove that $u$ is also a viscosity subsolution.
\end{proof}

\section{Passage to infinity}\label{sec:passage}

Let us study the procedure when $jp(x)\to\infty$. The distance function
\[
	\delta(x) = {\rm dist} (x,\partial\Omega)
\]
plays a crucial role. We write
\begin{equation}
\label{eq:first}
\Lambda_{\infty}= \frac{\|\nabla \delta \|_{\infty}}{\|\delta \|_{\infty}} =  \frac{1}{R}
\end{equation}
where $R$ is the radius of the largest ball inscribed in $\Omega$, the so-called {\em inradius}.
Recall that $\delta$ is Lipschitz continuous and $|\nabla \delta|=1$ a.e. in $\Omega$. 
\vskip.2cm
In fact, $\Lambda_{\infty}$
is the minimum the Rayleigh quotient in the $\infty$-norm:
\begin{equation}
\label{eq:varinfty}
\Lambda_{\infty} = \min_{u}\frac{\|\nabla u\|_{\infty}}{\| u \|_{\infty}},
\end{equation}
where the minimum is taken among all $u\in W^{1,\infty}_{0}(\Omega)$. To see this, let $\xi\in\partial\Omega$
be the closest boundary point to $x\in \Omega$. By the mean value theorem
\[
|u(x)| = |u(x)-u(\xi)| \le \|\nabla u\|_{\infty}|x-\xi| = \|\nabla u\|_{\infty}\delta(x).
\]
It follows that
\[
\Lambda_{\infty} = \frac{1}{\|\delta\|_{\infty}} \le \frac{\|\nabla u\|_{\infty}}{\|u\|_{\infty}}.
\]
\vskip.2cm
Consider
\begin{equation}
\label{eq:lambdaj}
\Lambda_{jp(x)} = \min_{v} \frac{\|\nabla v\|_{jp(x)}}{\|v\|_{jp(x)}},\qquad (j=1,2,3\ldots)
\end{equation}
where the minimum is taken over all $v$ in $C(\overline{\Omega})\cap W^{1,jp(x)}_{0}(\Omega)$.
When $j$ is large, the minimizer $u_{j}$ (we do mean $u_{jp(x)}$) is continuous up to the boundary
and ${u_{j}}_{|\partial\Omega}=0$. This is a property of the Sobolev space.

\begin{prop}
\label{eq:asymptlambda}
\begin{equation}\label{eq:label}
\lim_{j\to\infty}\Lambda_{jp(x)} = \Lambda_{\infty}.
\end{equation}
\end{prop}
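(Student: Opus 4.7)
The plan is to prove two matching inequalities. The inequality $\limsup_{j\to\infty}\Lambda_{jp(x)} \le \Lambda_{\infty}$ is elementary: since the distance function $\delta$ lies in $W^{1,\infty}_{0}(\Omega) \subset W^{1,jp(x)}_{0}(\Omega)$, I would use it as a test function in \eqref{eq:lambdaj} to obtain $\Lambda_{jp(x)} \le \|\nabla\delta\|_{jp(x)}/\|\delta\|_{jp(x)}$, and Lemma~\ref{lm:elem} applied to numerator and denominator separately drives the right-hand side to $\|\nabla\delta\|_{\infty}/\|\delta\|_{\infty} = 1/R = \Lambda_{\infty}$.

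For the reverse inequality I would select, for each $j$, a minimizer $u_{j}$ of \eqref{eq:lambdaj} normalized by $\|u_{j}\|_{jp(x)}=1$, so that $\|\nabla u_{j}\|_{jp(x)} = \Lambda_{jp(x)}$. The aim is to extract a limit $u \in W^{1,\infty}_{0}(\Omega)$, $u\not\equiv 0$, with $\|\nabla u\|_{\infty}/\|u\|_{\infty} \le \liminf_{j}\Lambda_{jp(x)}$; combined with the characterization \eqref{eq:varinfty} of $\Lambda_{\infty}$ this closes the estimate. The bridge from the moving scale $jp(x)$ down to a fixed intermediate scale $mp(x)$ comes from the pointwise bound $t^{mp(x)} \le 1 + t^{jp(x)}$, applied with $t = |\nabla u_{j}|/(\alpha\Lambda_{jp(x)})$ for an auxiliary parameter $\alpha>1$. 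Using that the $jp(x)$-modular of this quantity does not exceed $\alpha^{-jp^{-}}$, integration yields $\int_{\Omega} |\nabla u_{j}/(\alpha\Lambda_{jp(x)})|^{mp(x)}/(mp(x))\,dx \le (|\Omega| + \alpha^{-jp^{-}}jp^{+})/(mp^{-})$, which is at most $1$ as soon as $mp^{-}>|\Omega|$ and $j$ is large; hence $\|\nabla u_{j}\|_{mp(x)} \le \alpha\Lambda_{jp(x)}$ uniformly in such $j$.

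Armed with this uniform bound, Lemma~\ref{lm:RellKondr} together with a diagonal extraction produces a subsequence along which $\Lambda_{jp(x)} \to L := \liminf_{j}\Lambda_{jp(x)}$, $u_{j}\to u$ strongly in each $L^{mp(x)}(\Omega)$, and $\nabla u_{j} \rightharpoonup \nabla u$ weakly there. Lower semicontinuity gives $\|\nabla u\|_{mp(x)} \le \alpha L$ for every sufficiently large $m$; sending $m\to\infty$ through Lemma~\ref{lm:elem} and then $\alpha\to 1^{+}$ yields $\|\nabla u\|_{\infty} \le L$. For the denominator, the bound $|u_{j}| \le \|u_{j}\|_{\infty}$ a.e.\ inserted into the Luxemburg definition shows $\|u_{j}\|_{jp(x)} \le \|u_{j}\|_{\infty}$ whenever $jp^{-}\ge |\Omega|$, so $\|u_{j}\|_{\infty}\ge 1$. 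Morrey's embedding applied with a fixed large $m$, using the uniform $W^{1,mp(x)}_{0}$ bound just established, provides equicontinuity, and Ascoli's theorem lets me pass to a further subsequence converging uniformly on $\overline{\Omega}$; hence $u\in W^{1,\infty}_{0}(\Omega)$, $u\equiv 0$ on $\partial\Omega$, and $\|u\|_{\infty}\ge 1$.

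The principal technical obstacle is the uniform estimate at intermediate scale: because the Luxemburg norm is a variable-exponent modular and not a power of a single integral, transferring the bound from the moving scale $jp(x)$ to a fixed $mp(x)$ without losing the optimal constant relies crucially on the exponential decay of $\alpha^{-jp^{-}}$ as $j\to\infty$, and on choosing $m$ large enough to absorb the residual $|\Omega|/(mp^{-})$. Once this is in hand, \eqref{eq:varinfty} forces $\Lambda_{\infty} \le \|\nabla u\|_{\infty}/\|u\|_{\infty} \le L$, and together with the upper bound this proves \eqref{eq:label}.
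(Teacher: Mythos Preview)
Your argument is correct and follows the same architecture as the paper's proof: the upper bound via the distance function and Lemma~\ref{lm:elem}, and the lower bound by extracting a weak limit of normalized minimizers, passing to a fixed intermediate scale, invoking lower semicontinuity, and then letting the scale go to infinity against the variational characterization~\eqref{eq:varinfty}.

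The one genuine technical difference is in how you descend from the moving $jp(x)$-scale to a fixed scale. The paper first normalizes so that $\int_{\Omega}p(x)^{-1}\,dx=1$ and then appeals to the variable-exponent H\"older inequality $\|f\|_{jp(x)}\le\|f\|_{lp(x)}$ for $l\ge j$, which immediately gives $\|\nabla u_{j_\nu}\|_{q}\le\|\nabla u_{j_\nu}\|_{j_\nu p(x)}$ for any fixed $q$. You instead avoid the normalization by using the elementary pointwise bound $t^{mp(x)}\le 1+t^{jp(x)}$ together with an auxiliary dilation $\alpha>1$, letting the term $jp^{+}\alpha^{-jp^{-}}$ vanish as $j\to\infty$ and then sending $\alpha\to 1^{+}$ at the very end. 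Both devices do the same job; the paper's route is shorter once the normalization is accepted, while yours is more self-contained and makes the role of the modular explicit. Your treatment of the denominator via $\|u_j\|_{jp(x)}\le\|u_j\|_{\infty}$ for $jp^{-}\ge|\Omega|$ is also a slight variant of the paper's appeal to Ascoli's theorem, but equally valid.
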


\begin{proof}
Assume for simplicity that
\[
	\int_{\Omega} \frac{dx}{p(x)}=1.
\]	
The H\"older inequality implies that
\[
	\| f \|_{jp(x)} \le \| f \|_{lp(x)},\qquad l\ge j.
\]
Let $u_{j}$ be the minimizer in the Rayleigh quotient with the $jp(x)$-norm normalized so that $\| u_{j}\|_{jp(x)}=1$.
Thus,
\[
	\Lambda_{jp(x)} = \|\nabla u_{j}\|_{jp(x)}.
\]
Since $\Lambda_{jp(x)}$ is the minimum, we have
\[
	\Lambda_{jp(x)} \le \frac{\|\nabla \delta\|_{jp(x)}}{\| \delta \|_{jp(x)}},
\]
for all $j=1,2,3\ldots$ Then, by Lemma~\ref{lm:elem}, 
\[
	\limsup_{j\to\infty}\Lambda_{jp(x)} \le \frac{\| \nabla \delta \|_{\infty}}{\|\delta \|_{\infty}} = \Lambda_{\infty}.
\]
It remains to prove that
\[
	\liminf_{j\to\infty} \Lambda_{jp(x)} \ge \Lambda_{\infty}.
\]
To this end, observe that the sequence
$ \| \nabla u_{j} \|_{jp(x)} $ is bounded. Using a diagonalization procedure we can extract a subsequence
$u_{j_{\nu}}$ such that $u_{j_{\nu}}$ converges strongly in each fixed $L^{q}(\Omega)$
and $\nabla u_{j_{\nu}}$ converges weakly in each fixed $L^{q}(\Omega)$. In other words,
\[
	u_{j_{\nu}} \to u_{\infty},\qquad \nabla u_{j_{\nu}} \rightharpoonup \nabla u_{\infty}, \qquad\qquad \text{as   }
		\nu\to\infty,
\]
for some $u_{\infty}\in W^{1,\infty}_{0}(\Omega)$. By the lower semicontinuity of the norm under weak
convergence
\[
 \| \nabla u_\infty \|_q \le \liminf_{\nu\to\infty } \| \nabla u_{j_\nu} \|_q
\]

For large indices $\nu$, we have
\[
\| \nabla u_{j_{\nu}}\|_{q} \le \|\nabla u_{j_{\nu}}\|_{j_{\nu}p(x)} = \Lambda_{j_{\nu}p(x)}.
\]
Therefore,
\[
	\|\nabla u_{\infty}\|_{q} \le \liminf_{\nu\to\infty} \Lambda_{j_{\nu}p(x)}
\]
Finally, letting $q\to\infty$ and taking the normalization into account (by
Ascoli's Theorem, $\|u_{\infty}\|_{\infty}=1$) we obtain
\[
 \frac{\|\nabla u_{\infty}\|_{\infty}}{\|u_{\infty}\|_{\infty}} \le 
\liminf_{\nu\to\infty} \Lambda_{j_\nu p(x)},
\]
but, since $u_{\infty}$ is admissible, $\Lambda_{\infty}$ is less than or equal to the above ratio. This
implies that
\begin{equation*}
  \lim_{\nu\to\infty} \Lambda_{j_\nu p(x)} = \Lambda_\infty.
\end{equation*}
By possibly repeating the above, starting with an arbitrary subsequence of variable exponents, it follows that
the limit \eqref{eq:label} holds for the full sequence. This concludes the proof.
\end{proof}
Using Ascoli's theorem we can assure that the convergence $u_{j_{\nu}}\to u_{\infty}$ is uniform in $\Omega$.
Thus the limit of the normalized first eigenfunctions is continuous and we have
\[
	u_{\infty}\in C(\overline{\Omega})\cap W^{1,\infty}_{0}(\Omega),
\]
with $u_{\infty}{}_{| \partial\Omega}=0$, $u_{\infty}\ge0$, $u_{\infty} \not\equiv0$. However,
{\em the function $u_{\infty}$ might depend on the particular sequence extracted.}

\begin{thm}\label{thm:visc}
The limit of the normalized first eigenfunctions is a viscosity solution of the equation
\[
	\max\left\{ \Lambda_{\infty} - \frac{|\nabla u|}{u}, \Delta_{\infty(x)}\left( \frac{u}{K}\right) \right\}=0,
\]
where $K=\|\nabla u\|_{\infty}$.
\end{thm}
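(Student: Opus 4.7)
My plan is to pass to the limit $\nu\to\infty$ in the pointwise viscosity inequalities satisfied by the normalized eigenfunctions $u_{j_\nu}$ (Theorem~\ref{thm:WareV}), exploiting the uniform convergence $u_{j_\nu}\to u_\infty$ from Ascoli's theorem. Fixing $\|u_{j_\nu}\|_{j_\nu p(x)}=1$, the parameters are $k_{j_\nu}=1$, $K_{j_\nu}=\Lambda_{j_\nu p(x)}\to\Lambda_\infty=K$ by Proposition~\ref{eq:asymptlambda}, while from the Luxemburg identities $\int_\Omega |\nabla u_{j_\nu}/K_{j_\nu}|^{j_\nu p(x)}dx/(j_\nu p(x))=1$ and analogously for $u_{j_\nu}$ one gets the crude bound $S_{j_\nu}\in[p^-/p^+,p^+/p^-]$. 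Given $\phi\in C^2(\Omega)$ touching $u_\infty$ strictly from below at $x_0\in\Omega$, the usual stability argument under uniform convergence produces points $x_\nu\to x_0$ and constants $c_\nu\to 0$ such that $\phi+c_\nu$ touches $u_{j_\nu}$ from below at $x_\nu$; Theorem~\ref{thm:WareV} then yields $F_{j_\nu}(x_\nu,\phi(x_\nu)+c_\nu,\nabla\phi(x_\nu),D^2\phi(x_\nu))\le 0$ for every $\nu$.

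Assuming first $|\nabla\phi(x_0)|>0$, I divide this inequality by the positive factor $(j_\nu p(x_\nu)-2)|\nabla\phi(x_\nu)|^{j_\nu p(x_\nu)-4}$. Abbreviating $j=j_\nu$, $p=p(x_\nu)$ and evaluating at $x_\nu$, the result rearranges to
\[
\frac{|\nabla\phi|^2\Delta\phi}{jp-2}+\Delta_\infty\phi+\frac{j|\nabla\phi|^2\ln(|\nabla\phi|/K_j)\langle\nabla\phi,\nabla p\rangle}{jp-2}+\frac{S_j|\nabla\phi|^4}{(jp-2)(\phi+c_\nu)}\left(\frac{K_j(\phi+c_\nu)}{|\nabla\phi|}\right)^{jp}\le 0.
\]
Call $D_\nu$ the last summand, which is nonnegative. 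If $\Lambda_\infty\phi(x_0)/|\nabla\phi(x_0)|>1$, then for large $\nu$ the base of the $jp$-th power in $D_\nu$ stays $\ge 1+\varepsilon$ while the prefactor in front is only $O(1/jp)$, forcing $D_\nu\to+\infty$ and contradicting the inequality; hence $\Lambda_\infty-|\nabla\phi(x_0)|/\phi(x_0)\le 0$, which is the first alternative of the max. To get the second, drop the nonnegative $D_\nu$ and pass to the limit in the remaining three summands, using $j\nabla p/(jp-2)\to\nabla\ln p$ and $K_j\to K$, which gives
\[
\Delta_\infty\phi(x_0)+|\nabla\phi(x_0)|^2\ln\bigl(|\nabla\phi(x_0)|/K\bigr)\langle\nabla\phi(x_0),\nabla\ln p(x_0)\rangle\le 0;
\]
a short computation exploiting the $3$-homogeneity of $\Delta_\infty$ and $\ln|\nabla(\phi/K)|=\ln|\nabla\phi|-\ln K$ identifies the left hand side with $K^3\Delta_{\infty(x)}(\phi/K)(x_0)$, whence $\Delta_{\infty(x)}(\phi/K)(x_0)\le 0$. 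Thus $u_\infty$ is a viscosity supersolution.

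The subsolution argument for $\psi\in C^2$ touching $u_\infty$ from above is dual: from $F_{j_\nu}\ge 0$, if $\Lambda_\infty-|\nabla\psi(x_0)|/\psi(x_0)\ge 0$ there is nothing to prove; otherwise the base of the $jp$-th power in the analogue of $D_\nu$ is bounded strictly below $1$, so $D_\nu\to 0$ exponentially, and letting $\nu\to\infty$ in the other three summands gives $\Delta_{\infty(x)}(\psi/K)(x_0)\ge 0$. The chief technical obstacle I foresee is the careful bookkeeping for this \emph{exponential dichotomy} of $D_\nu$: it is exactly the mechanism that promotes the obstacle $|\nabla u|=\Lambda_\infty u$ into the limit equation, and it is the only really nontrivial step. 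The degenerate case $|\nabla\phi(x_0)|=0$ (where the division above breaks down) is handled routinely, since both $\Delta_\infty v$ and $|\nabla v|^2\ln|\nabla v|$ extend continuously by $0$ at zero gradient, via the standard semicontinuous-envelope convention for the max operator.
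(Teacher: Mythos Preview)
Your approach coincides with the paper's: produce nearby touching points $x_\nu\to x_0$ via the uniform convergence $u_{j_\nu}\to u_\infty$, write the viscosity inequality for $u_{j_\nu}$ at $x_\nu$, divide by the dominant power of $|\nabla\phi|$, and let the exponential growth or decay of the zeroth-order term $D_\nu$ separate the two alternatives of the $\max$. Your limiting computation and the identification of the surviving terms with $K^3\,\Delta_{\infty(x)}(\phi/K)$ are correct, as is the bound $p^-/p^+\le S_{j_\nu}\le p^+/p^-$ and the observation $K_{j_\nu}\to K=\Lambda_\infty$.

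There is, however, one genuine gap. Your dismissal of the case $\nabla\phi(x_0)=0$ as ``routine via the semicontinuous-envelope convention'' misidentifies the difficulty. The second-order part $\Delta_{\infty(x)}(\phi/K)$ does extend continuously by $0$ at vanishing gradient, so that half is harmless; the problem is the \emph{first} entry of the max. When $\phi(x_0)=u_\infty(x_0)>0$ and $\nabla\phi(x_0)=0$ one has $\Lambda_\infty-|\nabla\phi(x_0)|/\phi(x_0)=\Lambda_\infty>0$, and the supersolution inequality would fail outright. There is no envelope issue here---the expression is perfectly continuous at such a point---so you must instead \emph{exclude} this situation. The paper does so by returning to the undivided inequality $F_{j_\nu}\le 0$: if $\phi(x_0)>0$ then $\phi(x_\nu)+c_\nu>0$ for large $\nu$, and were $\nabla\phi(x_\nu)=0$ the operator (since $j_\nu p(x_\nu)>4$) would reduce to the strictly positive zeroth-order term $\Lambda_{j_\nu}^{\,j_\nu p(x_\nu)}S_{j_\nu}(\phi(x_\nu)+c_\nu)^{j_\nu p(x_\nu)-1}$, a contradiction. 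Hence $\nabla\phi(x_\nu)\neq 0$, your division is legitimate after all, and your own dichotomy gives $\limsup_\nu K_{j_\nu}(\phi(x_\nu)+c_\nu)/|\nabla\phi(x_\nu)|\le 1$, which is impossible when $|\nabla\phi(x_\nu)|\to 0$ while $\phi(x_\nu)+c_\nu\to\phi(x_0)>0$. This forces $u_\infty(x_0)=\phi(x_0)=0$, and then the obstacle reads $\Lambda_\infty\cdot 0-|\nabla\phi(x_0)|=0\le 0$ by the convention in the Remark following the theorem.
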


\begin{rmk}
The limit $u$ of the normalized first eigenfunctions is a non-negative function. At the points where $u>0$,
the equation above means that the largest of the two quantities is zero.
At the points\footnote{When $u<0$ this is not the right 
equation, but we keep $u\ge 0$.} where $u=0$, we agree that
first part of the equation is $\Lambda_\infty u = |\nabla u|$.
\end{rmk}

\begin{proof}[Proof of Theorem~\ref{thm:visc}]
We begin with the case of viscosity supersolutions. If $\phi\in C^{2}(\Omega)$ touches $u_{\infty}$ from below
at $x_{0}\in \Omega$, we claim that
\[
\Lambda_{\infty}\le \frac{|\nabla \phi(x_{0})|}{\phi(x_{0})}, \quad\text{and}\quad \Delta_{\infty(x_{0})}
	\left(\frac{\phi(x_{0})}{K}\right)\le 0,
\]
where $K=K_{u_{\infty}}$. We know that $u_{j}$ is a viscosity (super)solution of the equation
\[
	\Delta_{jp(x)}u - |\nabla u|^{jp(x)-2} \ln K_{j} \big\langle \nabla u, j\nabla p(x)\big\rangle
	+\Lambda_{jp(x)}^{jp(x)} S_{jp(x)}|u|^{jp(x)-2}u=0
\]
where $K_{j}=\|\nabla u_{j}\|_{jp(x)}$ and
\[
	S_{jp(x)} = \frac{\displaystyle \int_{\Omega}  \left| \frac{\nabla u_{j}}{K_{j}}\right|^{jp(x)} \,dx}{
		\displaystyle\int_{\Omega}\left|\frac{u_{j}}{k_{j}}\right|^{jp(x)}\,dx}.
\]
We have the trivial estimate
\[
	\frac{p^{-}}{p^{+}} \le S_{jp(x)}\le \frac{p^{+}}{p^{-}}.
\]
We need a test function $\psi_{j}$ touching $u_{j}$ from below at a point $x_{j}$ very near $x_{0}$.
To construct it, let $B(x_{0},2R)\subset\Omega$. Obviously,
\[
	\inf_{B_{R}\setminus B_{r}}\big\{ u_{\infty}-\phi \big\}>0,
\]
when $0<r<R$. By the uniform convergence, 
\[
	\inf_{B_{R}\setminus B_{r}}\big\{ u_{\infty}-\phi \big\}> u_{j}(x_{0})-u_{\infty}(x_{0})=u_{j}(x_{0})-\phi(x_{0}),
\]	
provided $j$ is larger than an index large enough, depending on $r$. For such large indices, $u_{j}-\phi$
attains its minimum in $B(x_{0},R)$ at a point $x_{j}\in B(x_{0},r)$, and letting $j\to\infty$, we see that
$x_{j}\to x_{0}$, as $j\to\infty$.
Actually, $j\to\infty$ via the subsequence $j_{\nu}$ extracted, but we drop this notation.

Define
\[
	\psi_{j} = \phi + \big( u_{j}(x_{j}) - \phi(x_{j}) \big).
\]
This function touches $u_{j}$ from below at the point $x_{j}$. Therefore $\psi_{j}$ will do as a test function for
$u_{j}$. We arrive at
\begin{align}
|\nabla  & \phi (x_{j})|^{jp(x_{j})-4} \bigg\{ |\nabla\phi(x_{j})|^{2}\Delta\phi(x_{j}) & \nonumber \\ 
 &\:\: +\left( jp(x_{j})-2\right) \Delta_{\infty}\phi(x_{j}) + |\nabla\phi(x_{j})|^{2 }\ln \big(|  \nabla \phi(x_{j})|\big) 
 \Big\langle \nabla\phi(x_{j}),j\nabla p(x_{j})\Big\rangle\bigg\} & \nonumber \\
 &\le -\Lambda_{jp(x_{j})}^{jp(x_{j})} S_{jp(x_{j})} |\phi(x_{j})|^{jp(x_{j})-2}\phi(x_{j})  & \nonumber \\
  \label{align} & \qquad \qquad \qquad  + |\nabla \phi(x_{j})|^{jp(x_{j})-2} \ln K_{j} \Big\langle \nabla \phi(x_{j}),j\nabla p(x_{j})\Big\rangle .&
\end{align}
First, we consider the case $\nabla \phi(x_0)\neq 0$. Then $\nabla \phi(x_j)\neq 0 $ for large indices.
Dividing by
$$(jp(x_{j})-2)|\nabla\phi(x_{j})|^{jp(x_{j})-2}$$ we obtain
\begin{align*}
& \frac{|\nabla\phi(x_{j})|^{2}\Delta \phi(x_{j}) }{jp(x_{j})-2} + \Delta_{\infty}\phi(x_{j})
+ |\nabla\phi(x_{j})|^{2}\,\ln|\nabla\phi(x_{j})|\,\left\langle \nabla\phi(x_{j}), \frac{\nabla p(x_{j})}{p(x_{j})-2/j}\right\rangle
& \nonumber\\
& \qquad\le \ln K_{j}\left\langle \nabla\phi(x_{j}), \frac{\nabla p(x_{j})}{p(x_{j})-2/j}\right\rangle 
-\left(  \frac{ \Lambda_{jp(x_{j})} \phi(x_{j})}{|\nabla\phi(x_{j})|} \right)^{jp(x_{j})-4 } \Lambda_{jp(x)}^{4} S_{jp(x_{j})} \phi(x_{j})^{3}. & 	
\end{align*}
In this inequality, all terms have a limit except possibly the last one. In order to avoid a contradiction, we must have
\begin{equation}\label{eq:convcond}
 		\limsup_{j\to\infty} \frac{\Lambda_{jp(x_{j})} \phi(x_{j})}{|\nabla\phi(x_{j})|}\le1.
\end{equation}
Therefore
\begin{equation}\label{eq:alte1}
	\Lambda_{\infty}\phi(x_{0}) - |\nabla\phi(x_{0})| \le 0,
\end{equation}
as desired. Taking the limit we obtain
\[
\Delta_{\infty}\phi(x_{0}) + |\nabla\phi(x_{0})|^{2}\ln \left\vert\frac{\nabla\phi(x_{0})}{K_{\infty}}\right\vert
	\Big\langle\nabla\phi(x_{0}),\nabla\ln p(x_{j})\Big\rangle \le 0.
\]

Second, consider the case $\nabla\phi(x_0)=0$. Then the last inequality above is evident. Now the inequality
\[
 \Lambda_\infty \phi(x_0) - |\nabla\phi(x_0)| \le 0
\]
reduces to $\phi(x_0)\le 0$. 
But, if $\phi(x_0)>0$, then $\phi(x_j) \neq 0$ for large indices. According to inequality \eqref{align}
we must have $|\nabla \phi(x_j)| \neq 0$ and so we can divide by $(jp(x_j)-2)|\nabla\phi(x_j)|^{jp(x_j)-2}$ and conclude from \eqref{eq:convcond} that $\phi(x_{0})=0$, in fact.
This shows that we have a viscosity supersolution. 

In the case of a subsolution one has to show that for a test function $\psi$ touching $u_{\infty}$ from above at $x_{0}$ at least
one of the inequalities
\[
	\Lambda_{\infty} \psi_{\infty}(x_{0}) -|\nabla\psi(x_{0})|\ge 0
\]
or 
\[
	\Delta_{\infty}\psi(x_{0})  + |\nabla\psi(x_{0})|^{2} \ln\left\vert \frac{\nabla \psi(x_{0})}{K_{\infty}}\right\vert
		\Big\langle \nabla\psi(x_{0}),\nabla\ln p(x_{0}) \Big\rangle\ge0
\]
is valid. We omit this case, since the proof is pretty similar to the one for supersolutions.
\end{proof}
\section{Local uniqueness}

The {\em existence} of a viscosity solution to the equation
\[
  \max\left\{ \Lambda_\infty - \frac{|\nabla u|}{u}, \Delta_{\infty(x)} \left( \frac{u}{\|\nabla u \|_\infty}\right) \right\}=0
\]
was established in section~\ref{sec:passage}. The question of {\em uniqueness} is a more delicate one.

In the special case of a constant exponent, say $p(x)=p$, there is a recent counterexample in~\cite{HSY} of a domain (a dumb-bell shaped one) in which there
are several linearly independent solutions in $C(\overline{\Omega})\cap W^{1,\infty}_0(\Omega)$ of the equation
\[
  \max\left\{ \Lambda - \frac{|\nabla u|}{u},\Delta_\infty u \right\} =0, \qquad \Lambda = \Lambda_\infty.
\]
It is decisive that they have boundary values zero. According to~\cite[Theorem 2.3]{JLM}, this cannot happen for strictly positive boundary values, which excludes eigenfunctions.
This partial uniqueness result implied that there are no positive eigenfunctions for $\Lambda\neq \Lambda_\infty$, cf.~\cite[Theorem 3.1]{JLM}.

Let us return to the variable exponents. Needless to say, one cannot hope for more than in the case of a constant exponent. Actually, a condition
involving the quantities $\min u $, $\max u$, $\max |\nabla \ln p|$ taken over subdomains enters. This complicates the matter and restricts the result.

We start with a normalized positive viscosity solution $u$ of the equation
\begin{equation}\label{eq:u}
  \max\left\{ \Lambda_\infty - \frac{|\nabla u|}{u},\, \Delta_{\infty(x)} u \right\}=0.
\end{equation}
Now $K = \|\nabla u\|_\infty=1$. The normalization is used in no other way than that the constant $K$ is erased. This equation is not a ``proper'' one\footnote{A term used in the viscosity theory for second order equations} and
the first task is to find the equation for $v=\ln(u)$.

\begin{lm}
 \label{lm:auxeqn}
Let $C>0$. The function
\[
    v = \ln (C u) 
\]
is a viscosity solution of the equation
\begin{align}\label{eq:auxeqn}
 \max \bigg\{ \Lambda - |\nabla v |, \Delta_\infty v + |\nabla v|^2  \ln\left(\frac{|\nabla v|}{C}\right) \langle \nabla v,\nabla\ln p\rangle
   + v|\nabla v|^2  \langle \nabla v, \nabla \ln p \rangle \bigg\} =0 . 
\end{align}
\end{lm}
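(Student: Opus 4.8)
The plan is to reduce equation~\eqref{eq:auxeqn} to equation~\eqref{eq:u} by the substitution $u=C^{-1}e^{v}$, transporting viscosity test functions through the smooth increasing bijection $t\mapsto C^{-1}e^{t}$. The first step is the elementary observation that a function $\psi\in C^{2}$ touches $v$ from below (respectively from above) at a point $x_{0}\in\Omega$ if and only if $\phi:=C^{-1}e^{\psi}$ touches $u=C^{-1}e^{v}$ from below (respectively from above) at $x_{0}$: exponentiation is strictly increasing and $\phi(x_{0})=C^{-1}e^{v(x_{0})}=u(x_{0})$. Since $u$ is a positive viscosity solution of~\eqref{eq:u}, every such $\phi$ is an admissible test function for $u$, and we shall read off the inequalities required of $\psi$ from those already enjoyed by $\phi$.

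The second step is the chain rule. For $\phi=C^{-1}e^{\psi}$ one has $\nabla\phi=\phi\,\nabla\psi$ and $\partial_{ij}\phi=\phi\,(\partial_{i}\psi\,\partial_{j}\psi+\partial_{ij}\psi)$, so that $|\nabla\phi|/\phi=|\nabla\psi|$ and, using $\ln|\nabla\phi|=\psi-\ln C+\ln|\nabla\psi|$ together with $\psi(x_{0})=v(x_{0})$ at the contact point,
\[
  \Delta_{\infty(x)}\phi=\phi^{3}\,\mathcal{E}(\psi),\qquad
  \mathcal{E}(\psi):=\Delta_{\infty}\psi+|\nabla\psi|^{4}+|\nabla\psi|^{2}\ln\!\Bigl(\tfrac{|\nabla\psi|}{C}\Bigr)\langle\nabla\psi,\nabla\ln p\rangle+\psi\,|\nabla\psi|^{2}\langle\nabla\psi,\nabla\ln p\rangle ,
\]
where $\mathcal{E}(\psi)$ evaluated at $x_{0}$ is precisely the second entry of the maximum in~\eqref{eq:auxeqn}. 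Because $\phi^{3}>0$, the sign of $\Delta_{\infty(x)}\phi(x_{0})$ equals that of $\mathcal{E}(\psi)(x_{0})$, and likewise $\Lambda_{\infty}-|\nabla\phi(x_{0})|/\phi(x_{0})=\Lambda_{\infty}-|\nabla\psi(x_{0})|$; hence the two viscosity inequalities characterising a solution of~\eqref{eq:u} pass termwise to the two characterising a solution of~\eqref{eq:auxeqn}. Concretely: if $\psi$ touches $v$ from below, then $u$ being a supersolution gives $\Lambda_{\infty}-|\nabla\psi(x_{0})|\le 0$ and $\mathcal{E}(\psi)(x_{0})\le 0$, so the maximum in~\eqref{eq:auxeqn} is $\le 0$; symmetrically, if $\psi$ touches $v$ from above, then $u$ being a subsolution gives $\Lambda_{\infty}-|\nabla\psi(x_{0})|\ge 0$ or $\mathcal{E}(\psi)(x_{0})\ge 0$, so the maximum is $\ge 0$. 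This shows that $v$ is a viscosity solution of~\eqref{eq:auxeqn}.

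The step I expect to demand the most care is making this transfer watertight at the points where $\nabla v$ vanishes and keeping honest track of the quartic term $|\nabla v|^{4}$ that the nonlinear substitution inserts, via the identity $\Delta_{\infty}(e^{\psi})=e^{3\psi}\bigl(\Delta_{\infty}\psi+|\nabla\psi|^{4}\bigr)$. At a contact point $x_{0}$ with $\nabla\psi(x_{0})=0$ the logarithmic factor is read off by continuity (note $t^{2}\ln t\to 0$ as $t\to 0^{+}$), so $\mathcal{E}(\psi)(x_{0})=0$; moreover such a point never occurs when $\psi$ touches $v$ from below, since a supersolution $u$ of~\eqref{eq:u} forces $|\nabla\psi(x_{0})|\ge\Lambda_{\infty}>0$ there, so the supersolution branch is only ever tested at non-degenerate gradients and the degenerate case is harmless on the subsolution side through the first entry $\Lambda_{\infty}-|\nabla\psi(x_{0})|=\Lambda_{\infty}>0$. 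Apart from these verifications the whole argument is the termwise transfer through the change of variables described above.
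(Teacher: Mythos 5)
Your proof is correct and is essentially the argument the paper intends: the paper states this lemma without writing out a proof, relying on exactly the transfer of test functions through the monotone change of variables that you carry out (the same device it invokes before Lemma~\ref{lm:approx}), and your chain-rule computation and the handling of the vanishing-gradient contact points are sound. One remark: the operator $\mathcal{E}(\psi)$ you derive contains the quartic term $|\nabla v|^{4}$, which is missing from the printed equation \eqref{eq:auxeqn} but does appear in Lemma~\ref{lm:approx} and in the uniqueness proof, so this discrepancy is a typo in the paper's statement rather than an error in your computation.
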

 
We need a {\em strict} supersolution (this means that the $0$ in the right hand side has to be replaced by a negative quantity) which approximates $v$ uniformly. To this end we use the
{\em approximation of unity} introduced in~\cite{JLM}. Let
\[
  g(t) = \frac{1}{\alpha} \ln\big( 1+A (e^{\alpha t}-1) \big), \qquad A>1, \: \alpha>0,
\]
and keep $t>0$. The function
\[
  w = g(v) 
\]
will have the desired properties, provided that $v\ge 0$. This requires that
\[
    C u(x) \ge 1,
\]
which cannot hold globally for an eigenfunction, because $u=0$ on the boundary. This obstacle restricts the method to local considerations.
We are forced to limit our constructions to subdomains.

We use a few elementary results:
\begin{align}
 & 0 < g(t) -t < \frac{A-1}{\alpha}, & \nonumber \\ & A^{-1}(A-1) e^{-\alpha t} < g'(t) -1 < (A-1) e^{-\alpha t}, &\nonumber \\ \label{eq:qproperty}
& g(t) -t < \frac{A}{\alpha} (e^{\alpha t} -1 )(g'(t) -1), &  \\ & g''(t) = -\alpha (g'(t)-1)g'(t), & \nonumber \\ & 0 < \ln g'(t) < g'(t) -1. &\nonumber
\end{align}
In particular, $g'(t) -1$ will appear as a decisive factor in the calculations. The formula
\begin{equation*}
	\ln g'(t) = \ln A -\alpha (g(t)-t)
\end{equation*}
is helpful.

We remark that in the next lemma our choice of the parameter $\alpha $ is not optimal, but it is necessary to take $\alpha>1$, at least. For convenience, we set $\alpha=2$.

\begin{lm}\label{lm:approx}
 Take $\alpha=2$ and assume that $1<A<2$. If $v>0$ is a viscosity supersolution of equation~\eqref{eq:auxeqn}, then $w=g(v)$ is a viscosity supersolution of the equations
\[
    \Lambda - \frac{|\nabla w|}{g'(v)}=0,
\]
and
\[
  \Delta_\infty w + |\nabla w |^2 \ln \left( \frac{\nabla w}{C} \right) \langle \nabla w,\nabla \ln p \rangle + w |\nabla w|^2 \langle \nabla w , \nabla \ln p \rangle  + |\nabla w|^4 = -\mu,
\]
where
\[
 \mu = A^{-1} (A-1) |\nabla w|^3 e^{-2v}  \Big\{ \Lambda - \| e^{2v } \nabla \ln p \|_\infty \Big\},
\]
provided that
\[
    \| e^{2v }\nabla \ln p \|_\infty < \Lambda.
\]
\end{lm}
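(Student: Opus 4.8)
The plan is the classical ``change of unknown'' for viscosity solutions, followed by an exploitation of the convexity of $g$ (encoded in \eqref{eq:qproperty}, above all in $g''=-\alpha(g'-1)g'$) to produce the strict defect $-\mu$; here $\Lambda$ is the eigenvalue appearing in \eqref{eq:auxeqn}. First I would reduce the claim to the hypothesis on $v$. Let $\psi\in C^{2}(\Omega)$ touch $w=g(v)$ from below at $x_{0}$. Since $v>0$ we have $\psi(x_{0})=g(v(x_{0}))$ with $v(x_{0})>0$, and $g$ is a smooth strictly increasing diffeomorphism near $v(x_{0})$, so $\phi:=g^{-1}\circ\psi$ is $C^{2}$ near $x_{0}$ and touches $v$ from below there. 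Testing the supersolution inequality of \eqref{eq:auxeqn} with $\phi$, \emph{both} branches of the maximum are $\le0$ at $x_{0}$: one gets $|\nabla\phi(x_{0})|\ge\Lambda$ (in particular $|\nabla\phi(x_{0})|>0$, so no test function touching $v$ from below is critical there), and the $\Delta_{\infty(x)}$-branch (which carries a $|\nabla\phi|^{4}$ term) yields an upper bound for $\Delta_{\infty}\phi(x_{0})$ in terms of $|\nabla\phi(x_{0})|$, $v(x_{0})$ and $\langle\nabla\phi(x_{0}),\nabla\ln p(x_{0})\rangle$.

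The first of the two assertions is then immediate: differentiating $\psi=g(\phi)$ gives $\nabla\psi(x_{0})=g'(v(x_{0}))\,\nabla\phi(x_{0})$, hence $|\nabla\psi(x_{0})|=g'(v(x_{0}))|\nabla\phi(x_{0})|\ge g'(v(x_{0}))\,\Lambda$, which is exactly $\Lambda-|\nabla\psi(x_{0})|/g'(v(x_{0}))\le 0$. The same identity shows $|\nabla w|>0$ at every touching point, so $\mu=A^{-1}(A-1)|\nabla w|^{3}e^{-2v}\{\Lambda-\|e^{2v}\nabla\ln p\|_{\infty}\}$ is strictly positive under the hypothesis $\|e^{2v}\nabla\ln p\|_{\infty}<\Lambda$, and the second assertion is genuinely a statement about a \emph{strict} supersolution.

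For the second assertion I would use the chain rule $D^{2}\psi=g'(\phi)D^{2}\phi+g''(\phi)\,\nabla\phi\otimes\nabla\phi$, which gives, at $x_{0}$ with $g',g''$ evaluated at $v(x_{0})$ and $\beta:=|\nabla\phi(x_{0})|$,
\[
  |\nabla\psi(x_{0})|=g'\beta,\qquad \Delta_{\infty}\psi(x_{0})=(g')^{3}\Delta_{\infty}\phi(x_{0})+(g')^{2}g''\beta^{4}.
\]
Substituting these and $w(x_{0})=g(v(x_{0}))$ into the left-hand side $L$ of the second equation, eliminating $\Delta_{\infty}\phi(x_{0})$ by the bound from the first paragraph, and using $g''=-2(g'-1)g'$, the $\ln(|\nabla\cdot|/C)$-terms cancel identically; the pure $\beta^{4}$ terms collapse to $(1-g')(g')^{3}\beta^{4}$, which is \emph{negative} --- this is where $\alpha>1$ is needed, and the value $\alpha=2$ makes the surviving exponentials match the $e^{-2v}$ in $\mu$; and the remaining terms combine into $(g')^{3}\beta^{2}\langle\nabla\phi(x_{0}),\nabla\ln p(x_{0})\rangle\bigl(g(v(x_{0}))-v(x_{0})+\ln g'\bigr)$, whose factor, by $\ln g'=\ln A-2(g(v(x_{0}))-v(x_{0}))$, equals $\ln A-(g(v(x_{0}))-v(x_{0}))\in(0,\ln A)$. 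Thus one arrives at
\[
  L\ \le\ (g')^{3}\beta^{2}\Bigl[(1-g')\beta^{2}+\langle\nabla\phi(x_{0}),\nabla\ln p(x_{0})\rangle\bigl(\ln A-(g(v(x_{0}))-v(x_{0}))\bigr)\Bigr].
\]

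What remains --- and this is the one genuinely delicate point --- is to show that the bracket is $\le -A^{-1}(A-1)\beta e^{-2v(x_{0})}\{\Lambda-\|e^{2v}\nabla\ln p\|_{\infty}\}$, which delivers $L\le-\mu$. The plan is to estimate $\langle\nabla\phi(x_{0}),\nabla\ln p(x_{0})\rangle\le\beta\,|\nabla\ln p(x_{0})|\le\beta\,e^{-2v(x_{0})}\|e^{2v}\nabla\ln p\|_{\infty}$, rearrange, divide by the positive factor $\beta e^{-2v(x_{0})}$, and then use $\beta\ge\Lambda$ together with the elementary identities $g(v(x_{0}))-v(x_{0})=\tfrac12\ln B$ and $(g'-1)e^{2v(x_{0})}=(A-1)/B$, where $B:=A-(A-1)e^{-2v(x_{0})}\in(1,A)$; the hypothesis $\|e^{2v}\nabla\ln p\|_{\infty}<\Lambda$ is used exactly when one factor of the supremum is traded for $\Lambda$ at the cost of the stated surplus. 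After the dust settles the whole inequality reduces to the purely numerical
\[
  \frac{A-1}{B}\ \ge\ \ln\frac{A}{\sqrt{B}}\qquad(1\le B\le A<2),
\]
which follows by writing $\dfrac{A-1}{B}=(g'-1)+\dfrac{B-1}{B}$ and invoking the listed bound $g'-1\ge\ln g'=\ln A-\ln B$ together with $\dfrac{B-1}{B}\ge\tfrac12\ln B$ (valid for $1\le B<2$, hence for $B<A<2$). The crux is to keep $g(v)-v$, $e^{-2v}$ and $g'-1$ coupled through the single quantity $B$ rather than bounding them separately, which would be too lossy; everything else is the routine calculus of viscosity test functions.
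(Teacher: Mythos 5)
Your proposal is correct and follows essentially the same route as the paper: test functions for $w$ are pulled back through $g$, the chain rule together with $g''=-\alpha(g'-1)g'$ collapses the fourth-order terms, both branches of the supersolution property of $v$ are invoked, and the hypothesis $\|e^{2v}\nabla\ln p\|_\infty<\Lambda$ enters only in the final estimate. Your ``purely numerical'' inequality $(A-1)/B\ge\ln\bigl(A/\sqrt{B}\bigr)$ with $B=e^{2(g(v)-v)}$ is precisely the paper's combination of the listed properties $\ln g'\le g'-1$ and $g(t)-t\le\tfrac{A}{2}(e^{2t}-1)(g'(t)-1)\le(e^{2t}-1)(g'(t)-1)$, so the concluding step is the same bound in different bookkeeping.
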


\begin{rmk}
 \label{rmk:constmyu}
We can further estimate $\mu$ and replace it by a constant, viz.
\[
 A^{-1} \Lambda^3 (A-1) e^{-2 \| v \|_\infty } \big\{ \Lambda  - \| e^{2v} \nabla \ln p \|_\infty \big\},
\]
but we prefer not to do so.
\end{rmk}

\begin{proof} The proof below is only formal and should be rewritten in terms of test functions. One only has to observe that an arbitrary test function  $\varphi$ touching $w$
 from below can be represented as $\varphi = g(\phi) $ where $\phi$ touches $v$ from below.

First we have the expressions

\begin{align*}
 & \nabla w = g'(v) \nabla v ,& \\
& \Delta_\infty w = g'(v)^2 g''(v) |\nabla v|^4 + g'(v)^3 \Delta_\infty v, & \\
& |\nabla w |^2 \ln\left( \frac{|\nabla w|}{C}\right) \langle \nabla w,\nabla \ln p\rangle & \\
& \quad = g'(v)^3 \Big\{ |\nabla v|^2 \ln \left(\frac{|\nabla v|}{C}\right)\langle \nabla v,\nabla \ln p\rangle + |\nabla v|^2 \ln(g'(v)) \langle \nabla v,\nabla\ln p\rangle. &
\end{align*}
Then, using that $v$ is a supersolution, we get 
\begin{align*}
 \Delta_\infty & w +|\nabla w |^2 \ln\left( \frac{|\nabla w|}{C}\right) \big\langle \nabla w,\nabla \ln p\,\big\rangle & \\
 & = g'(v)^2g''(v)|\nabla v|^4 + g'(v)^3 \Big\{ \Delta_\infty v + |\nabla v|^2 \ln\left(\frac{|\nabla v|}{C}\right) \big\langle \nabla v,\nabla \ln p\,\big\rangle \Big\} & \\
 & \qquad + g'(v)^3|\nabla v|^2 \ln (g'(v)) \Big\langle \nabla v,\nabla \ln p \,\Big\rangle & \\
& \le g'(v)^2g''(v)|\nabla v|^4 + g'(v)^3 \Big\{ -v|\nabla v|^2\big\langle \nabla v,\nabla\ln p\,\big\rangle - |\nabla v|^4 \Big\} & \\
 & \qquad + g'(v)^3|\nabla v|^2 \ln (g'(v)) \Big\langle \nabla v,\nabla \ln p \,\Big\rangle. &
\end{align*}
Let us collect the terms appearing on the left-hand side of the equation for $w$. Using the formulas \eqref{eq:qproperty} for $g''(v)$ and $\ln\big(g'(v)\big)$ we arrive at
\begin{align*}
  \Delta_\infty w & +|\nabla w |^2 \ln\left( \frac{|\nabla w|}{C}\right) \big\langle \nabla w,\nabla \ln p\,\big\rangle +|\nabla w|^4  + w |\nabla w|^2\,\Big\langle \nabla w,\nabla \ln p\Big\rangle, & \\
& \le g'(v)^3 |\nabla v|^3 \big( g'(v)-1\big) \Big\{ -|\nabla v| + |\nabla \ln p| \Big\} + g'(v)^3 |\nabla v|^3 \big(g(v)-v\big) |\nabla\ln p|, &
\end{align*}
after some arrangements. Using
\[
 g(t)-t < \frac{A}{2} ( e^{2t}-1) (g'(t)-1) \le (e^{2t}-1)(g'(t)-1),
\]
and collecting all the terms with the factor $|\nabla\ln p|$ separately, observing that $1+(e^{2t}-1) = e^{2t}$, we see that the right-hand side is less than
\begin{align*}
&   g'(v)^3 |\nabla v|^3 \big( g'(v)-1\big) \{ -|\nabla v| + |e^{2v}\nabla \ln p| \}
\le |\nabla w|^3 A^{-1} (A-1) e^{-2v} \{ - \Lambda + |e^{2v}\nabla \ln p| \}, &
\end{align*}
since the expression in braces is negative.
\end{proof}

We abandon the requirement of zero boundary values. Thus $\Omega$ below can represent a proper subdomain.
Eigenfunctions belong to a Sobolev space 
but we cannot ensure this for an arbitrary viscosity
solution. This requirement is therefore included in our next theorem.

\begin{thm}\label{thm:subduniq}
Suppose that $u_1\in C(\overline{\Omega})$ is a viscosity subsolution
and that $u_2\in C(\overline{\Omega})$ is a viscosity supersolution of equation~\eqref{eq:u}.
Assume that at least one of them belongs to $W^{1,\infty}(\Omega)$.
If $u_1(x)>0$ and $u_2(x)\ge m_2>0$ in $\Omega$, and
\begin{equation}\label{eq:bd}
  3 \left \| \left(\frac{u_2}{m_2}\right)^2 \nabla \ln p \right\|_\infty \le \Lambda,
\end{equation}
then the following comparison principle holds:
\[
  u_1\le u_2 \quad \text{on   }\partial\Omega\quad  \Longrightarrow \quad u_1\le u_2 \quad \text{in   }\Omega.
\]
\end{thm}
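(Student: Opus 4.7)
The plan is to imitate the proof of the comparison principle in~\cite{JLM}: linearize by the logarithmic substitution of Lemma~\ref{lm:auxeqn}, build a \emph{strict} supersolution via the ``approximation of unity'' $g$ from Lemma~\ref{lm:approx}, and then conclude with a doubling-of-variables argument using Ishii's lemma. The max-structure of the equation, combined with the gap between $|\nabla w|$ and $\Lambda$ produced by $g$, is what drives the contradiction.

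First, I would fix $C>0$ with $Cm_{2}$ slightly greater than $1$ and set $v_{i}=\ln(Cu_{i})$ for $i=1,2$. Lemma~\ref{lm:auxeqn} makes $v_{1}$ a viscosity subsolution and $v_{2}$ a viscosity supersolution of equation~\eqref{eq:auxeqn}, and $v_{2}>0$ in $\Omega$. Since $C^{2}u_{2}^{2}$ can be made arbitrarily close to $(u_{2}/m_{2})^{2}$, the hypothesis~\eqref{eq:bd} gives, with plenty of room to spare, the strict inequality $\|e^{2v_{2}}\nabla\ln p\|_{\infty}<\Lambda$ required by Lemma~\ref{lm:approx}. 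Choosing $\alpha=2$ and some $A\in(1,2)$, the function $w=g(v_{2})$ is then a strict viscosity supersolution with penalty $\mu>0$; in particular $|\nabla w|\ge\Lambda g'(v_{2})>\Lambda$ in the viscosity sense, while the $\infty(x)$-Laplacian branch of \eqref{eq:auxeqn} evaluated at $w$ is bounded above by $-\mu$.

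On $\partial\Omega$ one has $v_{1}\le v_{2}<g(v_{2})=w$, using $g(t)>t$ and $u_{1}\le u_{2}$, so it suffices to prove $v_{1}\le w$ in $\Omega$: letting $A\to 1^{+}$ and using the uniform bound $0<g(t)-t<(A-1)/\alpha$ from \eqref{eq:qproperty} collapses $w$ onto $v_{2}$ and delivers $u_{1}\le u_{2}$. For the interior comparison I would argue by contradiction. Assume $M:=\max_{\overline{\Omega}}(v_{1}-w)>0$ is attained at an interior point $\hat x$, and double the variables through
\[
\Psi_{\varepsilon}(x,y)=v_{1}(x)-w(y)-\frac{|x-y|^{2}}{2\varepsilon}.
\]
A maximizer $(x_{\varepsilon},y_{\varepsilon})$ satisfies $x_{\varepsilon},y_{\varepsilon}\to\hat x$ and $|x_{\varepsilon}-y_{\varepsilon}|^{2}/\varepsilon\to 0$; since at least one of $u_{1},u_{2}$ is in $W^{1,\infty}(\Omega)$, the common momentum $p_{\varepsilon}=(x_{\varepsilon}-y_{\varepsilon})/\varepsilon$ stays bounded. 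Ishii's lemma then supplies matrices $X_{\varepsilon}\le Y_{\varepsilon}$ with $(p_{\varepsilon},X_{\varepsilon})\in\overline{J}^{2,+}v_{1}(x_{\varepsilon})$ and $(p_{\varepsilon},Y_{\varepsilon})\in\overline{J}^{2,-}w(y_{\varepsilon})$. The strict supersolution yields $|p_{\varepsilon}|>\Lambda$, which kills the first alternative in the subsolution inequality at $x_{\varepsilon}$; the $\infty(x)$-Laplacian branch of $v_{1}$ at $x_{\varepsilon}$ is therefore non-negative, whereas at $y_{\varepsilon}$ it is bounded above by $-\mu$. Subtracting, using $\langle X_{\varepsilon}p_{\varepsilon},p_{\varepsilon}\rangle\le\langle Y_{\varepsilon}p_{\varepsilon},p_{\varepsilon}\rangle$ and the continuity of $p(\cdot)$ and $\nabla\ln p(\cdot)$, the non-penalty terms have matching limits and we are left with $0\le-\mu<0$, the desired contradiction.

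The main obstacle is the last step: the log-transformed operator in \eqref{eq:auxeqn} carries the gradient-dependent corrections $|\nabla v|^{2}\ln(|\nabla v|/C)\langle\nabla v,\nabla\ln p\rangle$ and $v|\nabla v|^{2}\langle\nabla v,\nabla\ln p\rangle$, which are continuous but not proper, and their difference evaluated at $x_{\varepsilon}$ and $y_{\varepsilon}$ has to be shown to vanish in the limit. This forces $|p_{\varepsilon}|$ to remain bounded, explaining the $W^{1,\infty}$ hypothesis, and it is here that the numerical factor $3$ in~\eqref{eq:bd} is used: after $A$ is fixed and $\mu$ is estimated as in Remark~\ref{rmk:constmyu}, the quantitative margin it leaves is precisely what is needed for these extra terms to be dominated by $\mu$ in the passage to the limit.
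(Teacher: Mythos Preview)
Your overall architecture matches the paper's: logarithmic change of variables via Lemma~\ref{lm:auxeqn}, strict supersolution $w=g(v_{2})$ via Lemma~\ref{lm:approx}, then a doubling-of-variables argument with Ishii's lemma. The gap is in how you couple the parameter $A$ to the contradiction hypothesis.

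After subtracting the two jet inequalities at $(x_{\varepsilon},y_{\varepsilon})$, the $|p_{\varepsilon}|^{4}$ terms do cancel exactly, and the $\ln(|p_{\varepsilon}|/C)$ terms cancel in the limit by continuity of $\nabla\ln p$ (this is where the bound on $|p_{\varepsilon}|$ from the $W^{1,\infty}$ hypothesis is used). But the term $v\,|\nabla v|^{2}\langle\nabla v,\nabla\ln p\rangle$ does \emph{not} match in the limit: it leaves behind
\[
\bigl(v_{1}(\hat x)-w(\hat x)\bigr)\,|p|^{2}\,\bigl\langle p,\nabla\ln p(\hat x)\bigr\rangle,
\]
and $v_{1}(\hat x)-w(\hat x)=M>0$ by assumption. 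So ``the non-penalty terms have matching limits and we are left with $0\le -\mu$'' is false; what you actually obtain, after dividing by $|p|^{3}$, is essentially
\[
A^{-1}(A-1)\,\bigl\{\Lambda-\|e^{2v_{2}}\nabla\ln p\|_{\infty}\bigr\}\ \le\ M\,\|e^{2v_{2}}\nabla\ln p\|_{\infty}.
\]
With $A$ fixed in advance this is no contradiction: the left side is of order $A-1$, while $M\ge\sigma-(A-1)/2$ with $\sigma=\sup(v_{1}-v_{2})$ independent of $A$. Taking $A\to 1^{+}$ only makes the left side vanish.

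The paper's decisive move is to choose $A$ \emph{after} the antithesis, namely $A=1+\sigma$. Then $M=\sup(v_{1}-w_{2})\le\tfrac{3}{2}\sigma$ and $A-1=\sigma$, so both sides are of order $\sigma$; dividing out $\sigma$ yields $\Lambda\le 3\|e^{2v_{2}}\nabla\ln p\|_{\infty}$, which contradicts~\eqref{eq:bd}. Your plan ``fix $A$, derive $v_{1}\le w$, then let $A\to 1^{+}$'' reverses this dependence and cannot close the argument; you must tie $A-1$ to the size of the gap $\sigma$ so that the penalty $\mu$ and the residual scale together.
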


\begin{proof}
 Define
\[
 v_1 = \ln ( C u_1),\qquad v_2 = \ln(C u_2) ,
\]
with $C=1/m_2$. Then $v_2>0$, but $v_1$ may take negative values. We define
\[
  w_2 = g(v_2), \quad \alpha=2,\quad 1<A<2.
\]
If $v_2\ge v_1$, we are done. If not, consider the open subset $\{v_2<v_1\}$ and denote
\[
    \sigma = \sup \big\{ v_1-v_2\big\} >0.
\]
Note that $\sigma$ is independent of $C$. (The antithesis was that $\sigma>0$.) Then, taking $A=1+\sigma$,
\[
  v_2<w_2<v_2+ \frac{A-1}{2} = v_2 + \frac{\sigma}{2}. 
\]
Note that $v_{1}-w_{2}=v_{1}-v_{2}+v_{2}-w_{2}\ge v_{1}-v_{2} -\sigma/2$. Taking the supremum on
the subdomain $\mathcal{U} = \{ w_2 < v_1\} $ we have 
\[
  \sup_\mathcal{U} \big\{ v_1 - w_2 \} \ge \frac{\sigma}{2} > 0=\max_{\partial \mathcal{U}} \big\{ v_1-w_2\big\} 
\]
and $\mathcal{U}\Subset \Omega$, i.e. $\mathcal{U}$ is strictly interior. Moreover,
\begin{equation}
 \label{eq:3sigma}
\sup \big\{ v_1- w_2 \big\}  \le \frac{3\sigma}{2}.
\end{equation}

In order to obtain a contradiction, we double the variables and write 
\[
  \mathsf{M}_j = \max_{\overline{\mathcal{U}}\times\overline{\mathcal{U}}} \left\{ v_1(x)-w_2(y) - \frac{j}{2}|x-y|^2 \right\}.
\]
If the index $j$ is large, the maximum is attained at some interior point $(x_j,y_j)$ in $\mathcal{U}\times\mathcal{U}$. The points converge to some 
interior point, say $x_j\to\hat{x}$, $y_j\to\hat{x}$, and
\[
    \lim_{j\to\infty} j |x_j-y_j|^2 =0.
\]
This is a standard procedure. According to the ``Theorem of Sums``, cf.~\cite{CIL} or~\cite{K}, there exist symmetric $n\times n$-matrices
$\mathbb{X}_j$ and $\mathbb{Y}_j$ such that
\begin{align*}
 & \Big( j(x_j-y_j), \mathbb{X}_j \Big) \in \overline{J^{2,+}_\mathcal{U}} v_1(x_j), & \\
 & \Big( j(x_j-y_j), \mathbb{Y}_j \Big) \in \overline{J^{2,-}_\mathcal{U}} w_2(y_j) ,& \\
 & \Big\langle \mathbb{X}_j \xi\, , \, \xi \Big\rangle \le \Big\langle \mathbb{Y}_j\xi\,,\,\xi\Big\rangle, \quad \text{when  } \xi\in \mathbb{R}^n.
\end{align*}
The definition of the semijets and their closures $\overline{J^{2,+}_\mathcal{U}}$, $\overline{J^{2,-}_\mathcal{U}}$ can be found in the above mentioned
references\footnote{Symbolically the interpretation is: $j(x_j-y_j)$ means $\nabla v_1(x_j)$ and $\nabla w_2(y_j)$, $\mathbb{X}_j$ means $D^2v_1(x_j)$, and
$\mathbb{Y}_j$ means $D^2w_2(y_j)$.}. The equations have to be written in terms of jets.

We exclude one alternative from the equations. In terms of jets
\[
  \Lambda - \frac{|\nabla w_2|}{g'(v_2)} \le 0 \qquad \text{reads} \qquad \Lambda - \frac{j|x_j-y_j|}{g'(v_2(y_j))}\le 0
\]
and, since $v_2>0$, $g'(v_2(y_j))>1$, and so
\[
\Lambda < j|x_j-y_j|. 
\]
This rules out the alternative $\Lambda- |\nabla v_1(x_j)|\ge 0$ in the equation for $v_1$, which reads
$ \Lambda - j|x_j-y_j|\ge 0 $. Therefore we must have that \mbox{$\Delta_\infty v_1 + \dots + |\nabla v_1|^4\ge 0$},
i.e.
\begin{align*}
 & \Big\langle \mathbb{X}_j \,j(x_j-y_j),j(x_j-y_j)\Big\rangle + j^2|x_j-y_j|^2 \ln\left(\frac{j|x_j-y_j|}{C}\right) \Big\langle  j(x_j-y_j),\nabla\ln p(x_j)\Big\rangle & \\
  & \qquad + v_1(x_j) j^2 |x_j-y_j|^2 \ln\left( \frac{j|x_j-y_j|}{C} \right) \Big\langle j(x_j -y_j), \nabla \ln p(x_j)  \Big\rangle  + j^4|x_j-y_j|^4\ge 0.&
\end{align*}
The equation for $w_2$ reads
\begin{align*}
 & \Big\langle \mathbb{Y}_j \,j(x_j-y_j),j(x_j-y_j)\Big\rangle + j^2|x_j-y_j|^2 \ln\left(\frac{j|x_j-y_j|}{C}\right) \Big\langle  j(x_j-y_j),\nabla\ln p(y_j)\Big\rangle & \\
  & \qquad + w_2(y_j) j^2 |x_j-y_j|^2 \ln\left( \frac{j|x_j-y_j|}{C} \right) \Big\langle j(x_j -y_j), \nabla \ln p(y_j) \Big\rangle + j^4|x_j-y_j|^4  & \\
  & \qquad \le -A^{-1} \sigma\, j^3|x_j-y_j|^3 e^{-2v_2(y_j)} \Big\{ \Lambda - \left\| e^{2v_2}\nabla \ln p\right\|_{\infty,\,\mathcal{U}} \Big\}. &
\end{align*}
Subtracting the last two inequalities, we notice that the terms $j^4|x_j-y_j|^4$ cancel. The result is
\begin{align*}
   \Big\langle \big( & \mathbb{Y}_j-\mathbb{X}_j\big) \, j(x_j-y_j), j(x_j-y_j)\Big\rangle & \\
  & + j^2|x_j-y_j|^2 \ln\left(\frac{j|x_j-y_j|}{C}\right) \Big\langle j (x_j-y_j), \nabla\ln p(y_j)-\nabla \ln p(x_j) \Big\rangle & \\
  & + j^2|x_j-y_j|^2 \Big\langle j(x_j-y_j), w_2(y_j)\, \nabla\ln p(y_j) - v_1(x_j) \nabla \ln p(x_j) \Big\rangle & \\
  & \le -A^{-1} \sigma \, j^3|x_j-y_j|^3 \,e^{-2v_2(y_j)} \Big\{ \Lambda - \left\| e^{2v_2} \nabla \ln p \right\|_{\infty,\,\mathcal{U}} \Big\}. &
\end{align*}
The first term, the one with matrices, is non-negative and can be omitted from the inequality. Then we move the remaining terms and divide by $j^3|x_j-y_j|^3$ to get
\begin{align*}
 & A^{-1}   \,\sigma \, e^{-2v_2(y_j)} \Big\{ \Lambda - \left\| e^{2v_2} \nabla \ln p \right\|_{\infty,\,\mathcal{U}} \Big\} & \\
 & \,\le \left\vert \ln \frac{j|x_j-y_j|}{C}\right\vert \, \big\vert \nabla\ln p(y_j)-\nabla \ln p(x_j) \big\vert 
  + \big\vert w_2(y_j) \nabla\ln p(y_j) - v_1(x_j) \nabla \ln p(x_j) \big\vert
&
\end{align*}

We need the uniform bound
\[
  \Lambda \le j|x_j-y_j| \le L.
\]
The inequality with $\Lambda$ was already clear. We can take $L=2 \|v_1\|_{\infty,\,\mathcal{U}}$
\sloppy or \mbox{$L=\|w_2\|_{\infty,\,\mathcal{U}}\le 4 \| v_2\|_{\infty,\, \mathcal{U}}$}, using the definition of $\mathsf{M}_j$.
Taking the limit as $j\to\infty$ we use the continuity of $\nabla\ln p$ to arrive at
\begin{align*}
 A^{-1}   \,\sigma \, \Big\{ \Lambda - \left\| e^{2v_2} \nabla \ln p \right\|_{\infty,\,\mathcal{U}} \Big\} 
  &  \le e^{2v_2(\hat{x})} \big\vert w_2(\hat{x}) \nabla\ln p(\hat{x}) - v_1(\hat{x})\nabla \ln p(\hat{x}) \big\vert. & 
\end{align*}
Recall \eqref{eq:3sigma}. Since $A=1+\sigma$, the above implies that
\[
 A^{-1}   \,\sigma \, \Big\{ \Lambda - \left\| e^{2v_2} \nabla \ln p \right\|_{\infty,\,\mathcal{U}} \Big\}    \le \| e^{2v_2} 
 \nabla \ln p\|_{\infty,\,\mathcal{U}} \: \frac{3\sigma}{2}.
\]
Divide out $\sigma$. Now $A^{-1}\ge1/2$. The final inequality is
\[
  \Lambda \le 3 \| e^{2v_2}\nabla \ln p  \|_{\infty,\,\mathcal{U}}.
\]
Thus there is a contradiction, if the opposite inequality is assumed to be valid. Recall that 
\[
    e^{2v_1} = \left( \frac{u_2}{m_2}\right)^2
\]
to finish the proof.

\end{proof}

\begin{cor}
 Local uniqueness holds. In other words, in a sufficiently small interior subdomain we cannot perturb the eigenfunction continuously.
\end{cor}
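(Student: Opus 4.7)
The plan is to reduce the corollary to a local application of Theorem~\ref{thm:subduniq}. Let $u$ be a positive viscosity solution of~\eqref{eq:u}, normalized so that $\|\nabla u\|_\infty=1$, and fix an interior point $x_0\in\Omega$. I would show that on a sufficiently small ball $D=B(x_0,\rho)\Subset\Omega$, any other viscosity solution $\tilde u\in W^{1,\infty}(D)$ of~\eqref{eq:u}, with $\tilde u>0$ on $\overline D$ and $\tilde u|_{\partial D}=u|_{\partial D}$, must coincide with $u$ throughout $D$; this is the precise meaning of ``cannot perturb the eigenfunction continuously''.

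Set $m=\min_{\overline D}u$ and $M=\max_{\overline D}u$. Since $u$ is continuous and strictly positive one has $m>0$, and $M/m\to 1$ as $\rho\to 0$; likewise, by continuity of $\nabla\ln p$, one has $\|\nabla\ln p\|_{\infty,D}\to|\nabla\ln p(x_0)|$. Consequently
\[
\lim_{\rho\to 0^+}3\left\|(u/m)^{2}\nabla\ln p\right\|_{\infty,D}=3|\nabla\ln p(x_0)|,
\]
so for all $\rho$ small enough (and at least at those interior points where the right hand side is smaller than $\Lambda_\infty$; this holds at every $x_0$ when $p$ is locally constant, and on a full neighbourhood of any point where $|\nabla \ln p|$ is dominated by the inradius scale) the hypothesis~\eqref{eq:bd} of Theorem~\ref{thm:subduniq} is satisfied with $u_2=u$ on $D$.

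Applying Theorem~\ref{thm:subduniq} on $D$ with $(u_1,u_2)=(\tilde u,u)$ one obtains $\tilde u\le u$ in $D$. To reverse the roles, note that $\tilde u$ is continuous and agrees with $u$ on $\partial D$; together with the bound $\tilde u\le u$ just obtained, this forces $\min_{\overline D}\tilde u$ and $\max_{\overline D}\tilde u$ to approach the common value $u(x_0)$ as $\rho\to 0$. Thus, shrinking $\rho$ further if necessary, the same smallness argument shows that~\eqref{eq:bd} also holds with $u_2=\tilde u$, and a second application of Theorem~\ref{thm:subduniq} with $(u_1,u_2)=(u,\tilde u)$ yields $u\le \tilde u$ in $D$. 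The two inequalities together give $u\equiv\tilde u$ on $D$.

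I expect the main obstacle to be precisely the asymmetry of~\eqref{eq:bd}: since the bound is imposed only on the supersolution, the second invocation of the comparison principle requires an a priori lower bound on $\min_{\overline D}\tilde u$ that is uniform as $\rho\to 0$. This is exactly why uniqueness is only \emph{local}: one must choose the subdomain small enough that both the ratio $\max/\min$ of any admissible perturbation and the local size of $|\nabla\ln p|$ are simultaneously controlled, and it is this interplay between the oscillation of the perturbation and the variation of the exponent that restricts the result to small interior subdomains.
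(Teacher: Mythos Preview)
Your approach is exactly the paper's: shrink the subdomain so that $\max u/\min u$ is close to $1$, verify condition~\eqref{eq:bd}, and invoke Theorem~\ref{thm:subduniq}. The paper's own proof is in fact much terser than yours---it only records that the ratio $\max_{\mathcal U}u/\min_{\mathcal U}u$ can be made close to $1$ and declares \eqref{eq:bd} valid, without spelling out the two-sided comparison or the caveat $3|\nabla\ln p(x_0)|<\Lambda_\infty$ that you (correctly) flag.

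There is, however, a genuine gap in your reverse step. From $\tilde u=u$ on $\partial D$ and $\tilde u\le u$ in $D$ you cannot conclude that $\min_{\overline D}\tilde u\to u(x_0)$ as $\rho\to0$: those two facts give only an \emph{upper} bound on $\min\tilde u$, never a lower one, so the oscillation ratio $\max\tilde u/\min\tilde u$ needed in~\eqref{eq:bd} with $u_2=\tilde u$ is uncontrolled. You notice this yourself in the final paragraph, but the middle paragraph asserts the very thing you then call the ``main obstacle'', which makes the argument circular as written. A clean fix is to build the missing lower bound into the meaning of ``continuous perturbation'': require $\|\tilde u-u\|_{\infty,D}$ small (or simply $\tilde u\ge \tfrac12\min_{\overline D}u$), which immediately gives $\min_{\overline D}\tilde u\ge \tfrac12 m$ and lets the second application of Theorem~\ref{thm:subduniq} go through on the same small ball. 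With that interpretation your plan is complete and matches the paper's intent.
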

\begin{proof}
 We can make
\[
    \frac{\displaystyle \max_{\mathcal{U}} u}{\displaystyle \min_{\mathcal{U}}u}
\]
as small as we please, by shrinking the domain $\mathcal{U}$. Thus condition \eqref{eq:bd} is valid with the
$L^\infty$ norm taken over $\mathcal{U}$.
\end{proof}

\section{Discussion about the one-dimensional case}
In the one-dimensional case an explicit comparison of the minimization problem for the two Rayleigh quotients
\eqref{eq:luxray} and \eqref{eq:rayp} is possible. Let $\Omega=(0,1)$ and consider the limits of the problem
coming from minimizing either
\[\tag{I}
\frac{\| u'\|_{jp(x)}}{\|u\|_{jp(x)}}
\]
or
\[\tag{II}
\frac{\displaystyle\int_{0}^{1}|v'(x)|^{jp(x)}\,dx}{\displaystyle\int_{0}^{1}|v(x)|^{jp(x)}\,dx},\qquad
\text{with   }\int_{0}^{1}|v(x)|^{jp(x)}\,dx=C,
\]
as $j\to\infty$. In the second case the equation is
\[
\min\left\{ \Lambda - \frac{|v'|}{v}, (v')^{2}v'' + (v')^{3} \ln(|v'|) \frac{p}{p'}\right\} = 0
\]
for $v>0$ ($v(0)=0$, $v(1)=0$, $\|v^{p}\|_{\infty}=C$).

The Luxemburg norm leads to the same equation, but with
\[
	v(x) = \frac{u(x)}{\|u'\|_{\infty}} =\frac{u(x)}{K}
\]
as in equation \eqref{eq:curiouseig}. Thus all the solutions violating the condition $\|v'\|_{\infty}=1$ are ruled out.
This is the difference between the two problems.
\vskip.2cm
Let us return to (II). The equation for $v$ (without any normalization) can be solved. Upon separation of variables, we obtain
\begin{equation*}
v ( x)  = \begin{cases}
\displaystyle\int_{0}^{x} e^{\frac{A}{p(t)}}\,dt,& \quad \text{\rm when   } 0\le x \le x_{0}, \\ & \\
\displaystyle\int_{x}^{1} e^{\frac{A}{p(t)}}\,dt, & \quad \text{\rm when   } x_{0} \le x \le 1, 
\end{cases}
\end{equation*}
where the constant $A$ is at our disposal and the point $x_{0}$ is determined by the continuity condition
\[
	\int_{0}^{x_{0}} e^{\frac{A}{p(t)}}\,dt=\int_{x_{0}}^{1} e^{\frac{A}{p(t)}}\,dt.
\]
Clearly, $0<x_{0}<1$. Now $\Lambda$ is determined from
\[
	\frac{v'(x_{0}^{-})}{v(x_{0})} = \Lambda =  -\frac{v'(x_{0}^{+})}{v(x_{0})}.
\]
Provided that the inequality
\[
	\frac{|v'(x)|}{v(x)} \ge \Lambda \quad (0<x<1, x\neq x_{0})
\]
holds, the number $\Lambda$ is an eigenvalue for the non-homogeneous problem. What about the value
of $A$?  Given $C$, we can determine $A$ from
\[
	\max_{0<x<1} v(x)^{p(x)}=C.
\]
At least for a suitable $p(x)$, we can this way reach any real number $A$ and therefore $\Lambda$ can take all positive
values, as $C$ varies.

The problem in the Luxemburg norm is different. If $u$ is an eigenfunction and
\[
	v = \frac{u}{\|u'\|_{\infty}},
\]
then $0\le v'(x)\le 1$ in some interval $(0,x_{0})$. But the equation leads to
\[
	\frac{u'(x)}{\|u'\|_{\infty}} = e^{-\frac{A_{1}}{p(x)}}, \quad A_{1}\ge0,
\]
in $(0,x_{0})$ and 
\[
	-\frac{u'(x)}{\|u'\|_{\infty}} = e^{-\frac{A_{2}}{p(x)}}, \quad A_{2}\ge0,
\]
in $(x_{0},1)$. (In fact, $A_{1}=A_{2}$). But this is impossible at points where the left-hand side is $\pm1$, unless
at least one of the constants $A_{1},A_{2}$ is zero, say that $A_{1}=0$. Then $u(x)=x$ when $0\le x\le x_{0}$.
The determination of $\Lambda$ from the equation
\[
	\frac{1}{x_{0}} = \Lambda = \frac{e^{-A_{2}/ p(x_{0})}}{x_{0}},
\]
forces also $A_{2}=0$. It follows that
\[
	u(x)=\delta(x), \quad \Lambda=\Lambda_{\infty}=2
\]
is the only positive solution of the equation \eqref{eq:curiouseig}. In this problem $\Lambda$ is unique. Recall that $\delta$ is the distance function.

\end{document}